\title{Quadratic Points on Non-Split Cartan Modular Curves}
\author{Philippe Michaud-Rodgers}
\newtheorem{theorem}{Theorem}[section]
\newtheorem*{theorem*}{Theorem}
\newtheorem {corollary}[theorem]{Corollary}
\newtheorem {lemma}[theorem]{Lemma}
\newtheorem {proposition}[theorem]{Proposition}
\theoremstyle{definition}
\theoremstyle{remark}
\newtheorem*{remark*}{Remark}
\apptocmd{\sloppy}{\hbadness 10000\relax}{}{}
\DeclareMathOperator{\SL}{SL}
\newcommand{\hooklongrightarrow}{\lhook\joinrel\longrightarrow}
\providecommand{\Q}{\mathbb{Q}}
\providecommand{\Z}{\mathbb{Z}}
\providecommand{\R}{\mathbb{R}}
\renewcommand{\arraystretch}{2.0}
\address{Mathematics Institute\\
    University of Warwick\\
   Coventry  CV4 7AL \\
    United Kingdom}
\email{p.rodgers@warwick.ac.uk}
\begin{document}
\begin{abstract}
In this paper we study quadratic points on the non-split Cartan modular curves $X_{ns}(p)$, for $p = 7, 11,$ and $13$. Recently, Siksek proved \citep{samirnote} that all quadratic points on $X_{ns}(7)$ arise as pullbacks of rational points on $X_{ns}^+(7)$. Using similar techniques for $p=11$, and employing a version of Chabauty for symmetric powers of curves for $p=13$, we show that the same holds for $X_{ns}(11)$ and $X_{ns}(13)$. As a consequence, we prove that certain classes of elliptic curves over quadratic fields are modular.
\end{abstract}
\maketitle
{\centering\footnotesize  \emph{Pour Pap{\'e}} 
\par}
\setcounter{tocdepth}{1}
\tableofcontents
\vspace{-5mm}
\section{Introduction}

Rational points on many classes of modular curves have been, and continue to be, studied extensively. Recently, there has been an increased interest in quadratic points on modular curves. Quadratic points on $X_1(N)$ are well understood: Kamienny proved in \citep{kamienny} that there are no quadratic points on $X_1(N)$ for $N \geq 17$. Also, due to several recent papers \citep{ozmansiksek, joshaquad, hyperquad}, progress has been made in understanding quadratic points on $X_0(N)$: all quadratic points on $X_0(N)$ have been classified for genus  $2,3,4,$ and $5$. In this paper, we aim to understand the quadratic points on some non-split Cartan modular curves; namely $X_{ns}(p)$, for $p=7,11,$ and $13$.

Quadratic points on algebraic curves often arise due to the existence of a degree $2$ map to a curve with rational points. In fact, a well-known result of Harris and Silverman \citep[Corollary 3]{HandS} states that a curve of genus $\geq 2$ can have infinitely many quadratic points only if it is hyperelliptic or bielliptic. The curve $X_{ns}(p)$ comes equipped with a degree $2$ map to the modular curve $X_{ns}^+(p)$. Rational points on $X_{ns}^+(p)$ therefore provide a source of quadratic points on $X_{ns}(p)$. Any quadratic point that does \emph{not} arise in this way is said to be \emph{exceptional}. The main result of this paper is the following, with the case $p=7$ due to Siksek \citep{samirnote}.

\begin{theorem}[Main theorem]\label{mainthm} There are no exceptional quadratic points on $X_{ns}(p)$ for $p = 7, 11,$ or $13$.
\end{theorem}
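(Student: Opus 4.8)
The plan is to translate the problem into one about the symmetric square and then to determine $X^{(2)}(\mathbb{Q})$ for each of the three curves. Write $X=X_{ns}(p)$, $X^{+}=X_{ns}^{+}(p)$, let $\phi\colon X\to X^{+}$ be the natural degree $2$ map with associated involution $\iota$, and set $J=\mathrm{Jac}(X)$. A quadratic point on $X$ together with its conjugate is exactly an effective $\mathbb{Q}$-rational divisor $D=P+\bar P\in X^{(2)}(\mathbb{Q})$ with $P\notin X(\mathbb{Q})$, and it is non-exceptional precisely when $D=\phi^{*}R$ for some $R\in X^{+}(\mathbb{Q})$. The starting observation is that such a $D$ is a pullback if and only if it is $\iota$-invariant --- with the sole exception that a sum of two distinct conjugate fixed points of $\iota$ is $\iota$-invariant but not a pullback --- and that $\iota$-invariance is visible in $J$, since $c(D):=[D-\iota D]$ lies in the minus-part $J^{-}(\mathbb{Q})=(1-\iota)J(\mathbb{Q})$ and vanishes if and only if $D\sim\iota D$. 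Once one checks, for the genus $\ge 2$ curves $X_{ns}(11)$ and $X_{ns}(13)$, that they are non-hyperelliptic, every effective degree $2$ divisor has $\dim|D|=0$, so $D\sim\iota D$ forces $D=\iota D$. Thus $X^{(2)}(\mathbb{Q})$ is the disjoint union of the fibre of $c$ over $0$ --- consisting of the pullbacks together with possible sums of conjugate fixed points of $\iota$ --- and the fibres of $c$ over the nonzero elements in the image of $c$; the theorem reduces to showing the latter fibres are empty and that $\iota$ has no conjugate pair of fixed points over a quadratic field (a direct check on the branch locus of $\phi$, carried out by Siksek for $p=7$).

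For $p=7$ and $p=11$ the group $J^{-}(\mathbb{Q})$ is finite and this becomes a finite computation. When $p=7$, $X$ is an elliptic curve, $J^{-}=J$, and analysing $E(\mathbb{Q})$ together with the four branch points of $\phi$ directly suffices; this is Siksek's argument. When $p=11$, $X$ has genus $4$: I would fix an explicit (canonical) model of $X_{ns}(11)$ together with $\phi$, use the decomposition of $J^{-}$ up to isogeny into modular abelian varieties and the vanishing of the associated $L$-functions, with Kolyvagin's theorem, to deduce that $J^{-}(\mathbb{Q})$ has rank $0$, and compute its torsion and a generating set by reduction modulo small primes. Then I would enumerate the nonzero elements of $J^{-}(\mathbb{Q})$ and run a Mordell--Weil sieve at several primes $\ell$ of good reduction --- reducing $X^{(2)}$ and $c$ modulo $\ell$ and checking that the relevant fibres are empty over $\mathbb{F}_{\ell}$ --- to show that none of those classes lies in the image of $c$. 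The fibre of $c$ over $0$ is handled as above, completing $p=11$.

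For $p=13$ the rank of $J(\mathbb{Q})$ is positive: already $\mathrm{Jac}(X_{ns}^{+}(13))$, the Jacobian of the cursed curve, has Mordell--Weil rank $3$ equal to its dimension, so $X^{(2)}(\mathbb{Q})$ cannot be enumerated through $J$ and I would apply Siksek's Chabauty method for symmetric squares. With $g=g(X_{ns}(13))=8$, the first task is to compute, or at least bound above, $r=\mathrm{rank}\,J(\mathbb{Q})$ --- using the isogeny decomposition of $J$ and analytic ranks of the individual factors, the minus-part having dimension $5$ --- and to verify the Chabauty inequality $r\le g-2=6$. Next, for a well-chosen prime $\ell$ of good reduction, I would fix a basis of the subspace of $H^{0}(X_{\mathbb{Q}_{\ell}},\Omega^{1})$ annihilating $J(\mathbb{Q})$ (of dimension at least $g-r\ge 2$), form the associated Chabauty functions on the residue disks of $X^{(2)}(\mathbb{Q}_{\ell})$, and at each residue class of $X^{(2)}(\mathbb{F}_{\ell})$ containing a known rational point verify a formal-immersion (maximal-rank) condition forcing that known point to be the only rational point in its disk; residue classes containing no known rational point would be eliminated by a Mordell--Weil sieve combining several primes. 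This identifies $X^{(2)}_{ns}(13)(\mathbb{Q})$ with the known rational points --- the pullbacks of the seven rational points of $X_{ns}^{+}(13)$ determined by Balakrishnan--Dogra--M\"uller--Tuitman--Vonk, together with any CM quadratic points --- and one then checks directly that each of these is a pullback.

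I expect the main obstacle to lie in the case $p=13$: concretely, (i) provably pinning down, or at least usefully bounding, the Mordell--Weil rank of the $8$-dimensional Jacobian $J_{ns}(13)$, without which the bound $r\le g-2$, and hence the whole method, is unavailable; and (ii) making the Chabauty-and-sieve step succeed at \emph{every} residue disk, that is, finding a prime $\ell$ (or a combination of primes) for which the formal-immersion criterion holds uniformly and the Mordell--Weil sieve eliminates all residue classes without a rational point --- delicate because $r$ is likely close to $g-2$, leaving only a two-dimensional space of Chabauty differentials to exploit. A smaller but genuine technical point, needed to pass from ``$D\sim\iota D$'' to ``$D$ is a pullback'', is confirming the non-hyperellipticity of $X_{ns}(11)$ and $X_{ns}(13)$ and locating the fixed points of $\iota$ together with their fields of definition.
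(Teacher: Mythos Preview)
Your overall strategy is sound and, for $p=13$, essentially the same as the paper's. The differences worth noting are in execution and in one key insight you are missing.

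For $p=11$, the paper takes a much more elementary route than your proposed $J^{-}$-plus-sieve approach. Rather than computing $J^{-}(\Q)_{\mathrm{tors}}$ and sieving, it exploits the explicit model of $X_{ns}(11)$ as a fibre product of two elliptic curves and uses the maps from $X_{ns}(11)$ to the three rank-$0$ elliptic factors of $J_{ns}(11)$ (known via Chen's isogeny) to force the coordinates of any quadratic point into $\Q$ directly, together with a check that an auxiliary genus-$2$ curve has no rational points. No sieve and no non-hyperellipticity check are needed. Your approach would presumably work, but is heavier machinery for a case that admits a hands-on argument.

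For $p=13$, your anticipated obstacle (ii) --- that $r$ might be close to $g-2=6$, leaving only a two-dimensional space of Chabauty differentials --- is exactly where the paper's key insight lies, and the situation is far better than you expect. Using Chen's isogeny $J_{ns}(13)\sim J_0(169)$ together with an $L$-function computation on $\ker(u_{169}+1)$, the paper shows $\mathrm{rank}\,J_{ns}(13)(\Q)=\mathrm{rank}\,J_{ns}^{+}(13)(\Q)=3$; equivalently, your $J^{-}(\Q)$ has rank $0$ here too. Under this equal-rank condition the entire trace-zero subspace $(1-w^{*})\Omega_{X}$, of dimension $g_{X}-g_{C}=5$, consists of annihilating differentials --- so one has five differentials rather than two, and the formal-immersion test succeeds at every one of the seven known points for all small primes tried. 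The sieve then disposes of the remaining residue classes, but requires an ingredient you do not mention: since the Mordell--Weil group of the $8$-dimensional $J_{ns}(13)$ cannot be computed, the paper works with the finite-index subgroup $G_{13}=\varrho^{*}G_{13}^{+}\subset J_{ns}(13)(\Q)$ and must certify, via a $p$-saturation argument carried out on the more tractable $J_{ns}^{+}(13)$, that certain primes do not divide the unknown index $[J_{ns}(13)(\Q):G_{13}]$. This is what makes the sieve legitimate despite not knowing the full group.
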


The curves $X_{ns}^+(p)$ for $p=7$ and $11$ have infinitely many rational points. Indeed, $X_{ns}^+(7)$ is isomorphic to $\mathbb{P}^1$ and $X_{ns}^+(11)$ is an elliptic curve with Mordell--Weil group isomorphic to $\mathbb{Z}$, so we obtain infinitely many quadratic points on both $X_{ns}(7)$ and $X_{ns}(11)$, all non-exceptional. In contrast, $X_{ns}^+(13)$ has precisely seven rational points \citep{cursed}, and so (by the theorem above) $X_{ns}(13)$ has precisely seven pairs of quadratic points. Moreover, by considering equations for the degree $2$ map from $X_{ns}(13)$ to $X_{ns}^+(13)$, we are able to list (Table 2) these quadratic points on our model for $X_{ns}(13)$. 

A consequence of this theorem is the following.

\begin{corollary} Let $E$ be an elliptic curve defined over a quadratic field $K$, and suppose that $\overline{\rho}_{E,p}(\mathrm{Gal}(\overline{K}/K)) \subseteq C_{ns}(p)$ for $p=7,11,$ or $13$. Then $j(E) \in \Q$. Thus $E$ is modular. 
 \end{corollary}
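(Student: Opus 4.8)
The plan is to translate the hypothesis into the existence of a quadratic point on $X_{ns}(p)$, apply Theorem~\ref{mainthm} to pin down its image on $X_{ns}^+(p)$, and then read off the $j$-invariant. Recall the moduli interpretation of the non-split Cartan curve: for a number field $k$ and an elliptic curve $E/k$ with $j(E) \notin \{0,1728\}$, the group $\overline{\rho}_{E,p}(\mathrm{Gal}(\overline{k}/k))$ is contained in a conjugate of $C_{ns}(p)$ in $\mathrm{GL}_2(\mathbb{F}_p)$ if and only if $E$ gives rise to a non-cuspidal point of $X_{ns}(p)(k)$ lying above $j(E)$ on the $j$-line $X(1)$; the ``if'' direction uses that $-I \in C_{ns}(p)$, which kills the relevant descent obstruction. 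Applying this with $k = K$: if $j(E) \in \{0,1728\}$ then $j(E) \in \Q$ and the first assertion is trivial, while otherwise the hypothesis produces a point $P \in X_{ns}(p)(K)$, which, as $[K:\Q]=2$, is either a rational point or a genuine quadratic point of $X_{ns}(p)$.

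In either case I claim $\pi(P)$ is $\Q$-rational, where $\pi \colon X_{ns}(p) \to X_{ns}^+(p)$ is the degree $2$ quotient map. If $P \in X_{ns}(p)(\Q)$ this is immediate. Otherwise $P$ is a quadratic point, so by Theorem~\ref{mainthm} it is not exceptional, i.e. it is the pullback of a point $Q \in X_{ns}^+(p)(\Q)$, forcing $\pi(P) = Q$. The forgetful map $X_{ns}(p) \to X(1)$ factors over $\Q$ as $j_{+} \circ \pi$ with $j_{+} \colon X_{ns}^+(p) \to X(1)$, so $j(E) = j_{+}(\pi(P)) \in j_{+}\bigl(X_{ns}^+(p)(\Q)\bigr) \subseteq \Q$. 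Since Theorem~\ref{mainthm} handles $p = 7, 11$ and $13$ simultaneously, this gives $j(E) \in \Q$ for each of these primes.

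For modularity, fix $E_0/\Q$ with $j(E_0) = j(E)$. Then $E$ and $E_0$ become isomorphic over $\overline{\Q}$, so $E$ is a twist of $(E_0)_K$ by a character $\chi$ of $\mathrm{Gal}(\overline{K}/K)$ of order dividing $2$, $4$ or $6$, and the Galois representations attached to $E$ are those of $(E_0)_K$ tensored with $\chi$. By the modularity theorem over $\Q$, $E_0$ is modular; by cyclic base change for $\mathrm{GL}_2$ along the quadratic (hence cyclic) extension $K/\Q$, $(E_0)_K$ is modular over $K$; and modularity is preserved under twisting the associated automorphic representation by a Hecke character of $K$. Hence $E$ is modular over $K$. (When $j(E) \in \{0,1728\}$ one may instead invoke the modularity of elliptic curves with potential complex multiplication.)

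Beyond Theorem~\ref{mainthm}, the argument is essentially formal, and I do not expect a genuine obstacle: the only points meriting a little care are the moduli-theoretic translation in the first paragraph (with its routine exclusion of $j \in \{0,1728\}$, harmless since those values lie in $\Q$), the fact that $j$ descends from $X_{ns}(p)$ to $X_{ns}^+(p)$ over $\Q$, and the standard base-change-and-twist step underpinning the modularity conclusion.
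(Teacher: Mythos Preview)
Your proof is correct and follows essentially the same route as the paper: pass from the Galois-representation hypothesis to a quadratic point on $X_{ns}(p)$, apply Theorem~\ref{mainthm} to land in $X_{ns}^+(p)(\Q)$, and read off $j(E)\in\Q$ via the rational $j$-map. Your modularity paragraph is more explicit than the paper's one-line citation of \cite{modularityQ}, since you spell out the base-change-and-twist argument that is implicit in invoking BCDT for a curve over a quadratic field; this is a welcome clarification rather than a different approach.
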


Here, $C_{ns}(p)$ is a non-split Cartan subgroup, and $\overline{\rho}_{E,p}$ is the mod $p$ Galois representation attached to $E$. 

\begin{proof} The elliptic curve $E$ gives rise to a non-cuspidal $K$-point (i.e. a non-cuspidal quadratic point) on $X_{ns}(p)$ by appropriate choice of level structure. By Theorem \ref{mainthm} the image of this point under the map $\varrho$ is a rational point, and so $j(E) \in \Q$ as the $j$-map is rational. We then know that $E$ is modular by \citep[Theorem A]{modularityQ}.
\end{proof}

We note that quadratic points on certain modular curves are studied to prove that all elliptic curves over real quadratic fields are modular \citep{FlHS}. These types of modularity results play a key role in the study of Diophantine equations.

We present here the outline for the rest of the paper. In Section 2 we review the construction of non-split Cartan modular curves, study their Jacobians, and start considering quadratic points. In Section 3 we communicate the unpublished proof of Siksek \citep{samirnote}, showing that Theorem \ref{mainthm} holds in the case $p=7$. We apply similar ideas in Section 4 to show the theorem also holds for $p=11$. In Section 5 we study the case $p=13$. This requires more sophisticated machinery, and we see how a version of Chabauty for symmetric powers of curves, as developed in \citep{Symmchab, joshaquad}, is used to prove the theorem in this case.

\bigskip
 
All the computer calculations are carried out using  \texttt{Magma} \citep{magma}. The \texttt{Magma} code used to support the computations in this paper (which uses parts of the code associated with the papers \citep{joshaquad, ozmansiksek}) can be found at:

\vspace{2pt}

\begin{center}
\url{https://warwick.ac.uk/fac/sci/maths/people/staff/michaud/c/}
\end{center}

\bigskip

I would like to express my sincere gratitude to Samir Siksek and Damiano Testa for their wonderful support. I would also like to thank the anonymous referee for a very careful reading of the paper, and for providing many helpful comments.  The author is supported by an EPSRC studentship (2274692).

\section{Non-Split Cartan Modular Curves}

\subsection{Construction of $X_{ns}(p)$ and $X_{ns}^+(p)$}
We briefly recall here the construction of $X_{ns}(p)$ and $X_{ns}^+(p)$, following \citep[p.98]{doublecover}. Given any positive integer $N$, and a subgroup $H \subseteq \mathrm{GL}_2(\Z / N\Z)$, we associate a congruence subgroup, $\Gamma_H$, to $H$, by defining  \[ \Gamma_H \coloneqq \{A \in \SL_2(\Z) : (A \text{  mod } N) \in \SL_2(\Z / N \Z ) \cap H \}. \] We then define the open modular curve $Y_H \coloneqq \Gamma_H \backslash \mathbb{H}$, and its compactification by adjoining cusps $X_H \coloneqq \Gamma_H \backslash \mathbb{H}^*$, where $\mathbb{H}^* \coloneqq \mathbb{H} \cup \mathbb{P}^1(\Q)$ denotes the extended upper half plane. Then $X_H$ is a compact Riemann surface and so it corresponds to an algebraic curve. If $H_1,H_2$ are conjugate subgroups of $\mathrm{GL}_2(\Z / N\Z)$ then $X_{H_1} \cong X_{H_2}$. If the determinant map $\det: H \mapsto (\Z / N\Z)^*$ is surjective, then $X_H$ is defined over $\Q$.

Now, let $p$ be an odd prime, and choose $\lambda \in \mathbb{F}_p$ a quadratic nonresidue. We define a \emph{non-split Cartan subgroup} as \[C_{ns}(p)\coloneqq \Big\lbrace \left( \begin{smallmatrix} \alpha & \beta \lambda \\ \beta & \alpha \end{smallmatrix} \right) : (\alpha,\beta) \in \mathbb{F}_p^2~ \backslash \{(0,0)\} \Big\rbrace \subseteq \mathrm{GL}_2(\Z / p\Z). \] This subgroup is defined up to conjugation: a different choice of $\lambda$ will lead to a conjugate subgroup. The normaliser of $C_{ns}(p)$ in $\mathrm{GL}_2(\Z / p\Z)$ is denoted by $C_{ns}^+(p)$ and is given by \[ C_{ns}^+(p) = \Big\lbrace \left( \begin{smallmatrix} \alpha & \beta \lambda \\ \beta & \alpha \end{smallmatrix} \right), \left( \begin{smallmatrix} \alpha & \beta \lambda \\ -\beta & -\alpha \end{smallmatrix} \right): (\alpha,\beta)  \in \mathbb{F}_p^2 ~\backslash \{(0,0)\} \Big\rbrace \subseteq \mathrm{GL}_2(\Z / p\Z). \] This subgroup is again defined up to conjugation. As both $C_{ns}(p)$ and $C_{ns}^+(p)$ are defined up to conjugation, we can associate modular curves to these subgroups. We have \[ X_{ns}(p)  \coloneqq X_{C_{ns}(p)} \qquad  \text{and} \qquad X_{ns}^+(p)  \coloneqq X_{C_{ns}^+(p)}.  \] We will refer to both these modular curves, rather lazily, as \emph{non-split Cartan modular curves}. Since for both of these subgroups the determinant map is surjective, the modular curves $X_{ns}(p)$ and $X_{ns}^+(p)$ are both defined over $\Q$.

Since $[C_{ns}^+(p) : C_{ns}(p)]=2$, this leads to a natural degree $2$ map  \[ \varrho:X_{ns}(p) \longrightarrow X_{ns}^+(p) \] We refer to the map $\varrho$ as a \emph{degeneracy map} as it simply uses a coarser equivalence relation.

The curve $X_{ns}(p)$ comes equipped with an automorphism, called the \emph{modular involution}, which we denote $w_p$. This is the map that interchanges the points of $\varrho^{-1}(P)$ for each $P \in X_{ns}^+(p)$. The curve $X_{ns}^+(p)$ is $X_{ns}(p) / \langle w_p \rangle$, and $\varrho$ is the quotient map. We note that it was recently shown \citep[Corollary 5.17]{autcart} that for all $p \geq 13$, $\mathrm{Aut}(X_{ns}(p))=\langle w_p \rangle$, and also that  $\mathrm{Aut}(X_{ns}^+(p))=\{1\}$.

For further information on non-split Cartan subgroups and their normalisers, as well as their definitions for composite $n$, we refer to \citep[pp.~2754-2756]{Barannorm} and \citep{autcart}.

\subsection{Jacobians and Chen's Isogenies}

The Jacobians of the curves $X_{ns}(p)$ and $X_{ns}^+(p)$, which we denote $J_{ns}(p)$ and $J_{ns}^+(p)$ respectively, are related to the Jacobian of $X_0(p^2)$. This is useful as we understand this Jacobian well. We recall here how $J_0(N)$ is isogenous to a product of abelian varieties associated to newforms, following \citep[p.~234]{kani}.

Given $N>0$, write $f_1, \dots, f_k$ for representatives of the Galois-conjugacy classes of Hecke eigenforms in the space of cuspforms $S_2(N)$. Write $K_i$ for the Hecke eigenfield of $f_i$. This is a totally real number field, and we write $d_i$ for its degree. To each $f_i$ is associated a simple abelian variety $A_i / \Q$ of dimension $d_i$, with endomorphism algebra $\mathrm{End}(A_i) \otimes \Q= K_i$. In particular, the rank of $A_i$ is a multiple of $d_i$. Each $f_i$ arises from a newform at some level $M_i \mid N$. Write $m_i$ for the number of divisors of $N/M_i$. Then  \[ J_0(N) \sim A_1^{m_1} \times \dots \times A_k^{m_k}, \] where $\sim$ denotes isogeny over $\Q$. It follows that $\mathrm{Rk}(J_0(N)) = \sum_{i=1}^k m_i \cdot  \mathrm{Rk}(A_i).$

Although finding the rank of some $A_i$ may not be feasible in general, it is sometimes possible to determine  whether or not the rank is zero. Write $L(A_i,s)$ for the $L$-function of $A_i$. Then a theorem of Kolyvagin and Logachev \citep{klthm} asserts that if $L(A_i,1) \neq 0$, then $\mathrm{Rk}(A_i)=0$. Using the modular symbols algorithms of Cremona \citep[pp.~29-31]{cremona} and Stein \citep[pp.~52-57]{stein} it is possible to check whether or not $L(A_i,1)$ is zero. This is implemented in \texttt{Magma} as part of the `modular abelian varieties package'.

Some of the abelian varieties in this decomposition are attached to the newforms at level $N$ and the others come from oldforms. We denote by $J_0(N)_{\mathrm{new}}$ the product of those abelian varieties coming from newforms and we denote the product of the remaining abelian varieties (multiplicity included) by $J_0(N)_{\mathrm{old}}$, so that \[J_0(N) \sim J_0(N)_{\mathrm{new}} \times J_0(N)_{\mathrm{old}}.\]

The following result links the Jacobians of non-split Cartan modular curves to the Jacobians of the curves $X_0(N)$ and  $X_0^+(N) \coloneqq X_0(N) / \langle u_N \rangle $, where $u_N$ denotes the Atkin--Lehner involution on $X_0(N)$. 
\begin{theorem}[Chen's Isogenies \citep{chenjac,autold}]\label{Chen} Write $J_0(N)$ and $J_0^+(N)$ for the Jacobians of $X_0(N)$ and $X_0^+(N)$ respectively. Let $p$ be a prime, then \begin{align*} J_{ns}(p) & \sim \; J_0(p^2)_{\mathrm{new}}, \\ J_{ns}^+(p) & \sim J_0^+(p^2)_{\mathrm{new}}. \end{align*}
\end{theorem}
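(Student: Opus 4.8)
The plan is to deduce both isogenies from a comparison of traces of Hecke operators, via Eichler--Shimura and Faltings. Both $J_{ns}(p)$ and $J_0(p^2)_{\mathrm{new}}$ are abelian varieties over $\Q$ with good reduction away from $p$ (the curve $X_{ns}(p)$ has good reduction outside $p$, and the newform abelian varieties of level $p^2$ have conductor a power of $p$), and both have semisimple $\ell$-adic Tate modules by Faltings. So by Faltings' isogeny theorem together with Chebotarev it is enough to show that the Frobenius traces agree at every prime $\ell \neq p$; by the Eichler--Shimura relation $\mathrm{Tr}(\mathrm{Frob}_\ell \mid V_\ell(J_{ns}(p))) = \mathrm{Tr}(T_\ell \mid S_2(\Gamma_{ns}(p)))$ and similarly for $J_0(p^2)_{\mathrm{new}}$, so this reduces to
\[
\mathrm{Tr}\bigl(T_\ell \mid S_2(\Gamma_{ns}(p))\bigr) \;=\; \mathrm{Tr}\bigl(T_\ell \mid S_2(\Gamma_0(p^2))_{\mathrm{new}}\bigr), \qquad \ell \neq p .
\]
As the $T_\ell$ with $\ell \neq p$ are normal operators, both cuspform spaces are semisimple over the anemic Hecke algebra $\Z[T_\ell : \ell \neq p]$, so equality of all such traces in fact upgrades to an isomorphism of Hecke modules.

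To prove the displayed identity I would use the Eichler--Selberg trace formula. For the left-hand side one needs its form for the congruence subgroup $\Gamma_{ns}(p)$ --- equivalently, for the subgroup $C_{ns}(p) \subseteq \mathrm{GL}_2(\mathbb{F}_p)$: each elliptic, hyperbolic and parabolic term acquires a local factor at $p$ recording how many elements of $C_{ns}(p) \cong \mathbb{F}_{p^2}^{\times}$ have a prescribed characteristic polynomial, which is governed by whether that polynomial splits or stays inert modulo $p$. For the right-hand side one writes the trace formula for $S_2(\Gamma_0(p^2))$ and removes the $p$-old part: by inclusion--exclusion over the divisors of $p^2$ this part is $S_2(\Gamma_0(p^2)) - 2\,S_2(\Gamma_0(p)) + S_2(\Gamma_0(1))$ as a virtual Hecke module, so its trace is controlled by the two degeneracy maps $X_0(p^2) \to X_0(p)$. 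What remains is a purely local comparison at $p$, conjugacy class by conjugacy class, between the non-split Cartan count and the ``new at $p^2$'' count; this is Chen's observation. An equivalent packaging, following de Smit and Edixhoven, is to lift $C_{ns}(p)$ to its full preimage $\widetilde{C} \subseteq \mathrm{GL}_2(\Z/p^2\Z)$ (which does not change the modular curve) and prove the character identity
\[
\Q\bigl[\mathrm{GL}_2(\Z/p^2\Z)/\widetilde{C}\bigr] \;\oplus\; \Q\bigl[\mathrm{GL}_2(\Z/p^2\Z)/B_0(p)\bigr]^{\oplus 2} \;\cong\; \Q\bigl[\mathrm{GL}_2(\Z/p^2\Z)/B_0(p^2)\bigr] \;\oplus\; \Q
\]
of permutation representations (a fixed-point count on coset spaces), and then apply $\mathrm{Hom}_{\mathrm{GL}_2(\Z/p^2\Z)}\bigl(-,\,S_2(\Gamma(p^2))\bigr)$, using that $X_0(p^2)$, $X_0(p)$ and $X_{ns}(p)$ are all quotients of the common cover $X(p^2)$ and that $S_2(\Gamma_0(1)) = 0$.

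For the second isogeny I would additionally track the modular involution $w_p$. On $X_{ns}(p)$ it is the deck transformation of $\varrho$, and I would check that under the isogeny above it corresponds to the Atkin--Lehner involution $u_{p^2}$ on $J_0(p^2)_{\mathrm{new}}$ --- most cleanly by running the comparison of the previous paragraph with $C_{ns}(p)$ and $B_0(p^2)$ enlarged by their normaliser / Atkin--Lehner elements, so that the character identity becomes equivariant for these extra involutions; alternatively one simply repeats the trace comparison for the quotient curves $X_{ns}^+(p)$ and $X_0^+(p^2)$ directly. Since $X_{ns}^+(p) = X_{ns}(p)/\langle w_p\rangle$ and $X_0^+(p^2) = X_0(p^2)/\langle u_{p^2}\rangle$, and since a quotient by an involution has Jacobian isogenous to the $(+1)$-eigenpart of the Jacobian upstairs (and $u_{p^2}$ preserves the new/old decomposition), restricting to the $w_p = u_{p^2} = +1$ eigenspaces yields $J_{ns}^+(p) \sim J_0^+(p^2)_{\mathrm{new}}$.

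The main obstacle is the local computation at $p$: writing down the Eichler--Selberg trace formula for the non-standard level structure $\Gamma_{ns}(p)$ (with the correct cuspidal and elliptic-point contributions) and matching its $p$-local factor against the one coming from $S_2(\Gamma_0(p^2))$ after the $p$-old subspace has been excised --- equivalently, verifying the permutation-character identity above, which forces a genuinely level-$p^2$ calculation and cannot be seen modulo $p$ alone. A secondary delicate point is pinning down the correspondence $w_p \leftrightarrow u_{p^2}$ needed for the $X^+$ statement.
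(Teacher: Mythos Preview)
The paper does not prove this theorem: it is quoted as a known result from the literature, with citations to Chen and to Dose, and no proof environment follows the statement. So there is no ``paper's own proof'' to compare against.

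That said, your sketch is broadly in line with the proofs in those references. Chen's original argument is exactly a comparison of traces of Hecke operators via the Eichler--Selberg trace formula, carried out conjugacy class by conjugacy class, and the reformulation you mention in terms of a character identity of permutation representations of $\mathrm{GL}_2(\Z/p^2\Z)$ is the de~Smit--Edixhoven simplification of that argument. Your identification of the main obstacle --- the local computation at $p$, which genuinely requires working modulo $p^2$ --- is accurate, and is precisely the content of Chen's calculation. For the $X^+$ statement, tracking the compatibility $w_p \leftrightarrow u_{p^2}$ through the isogeny is the approach taken in the second cited reference. One small caution: your inclusion--exclusion formula for the $p$-new part as a virtual Hecke module needs the degeneracy maps to be made explicit to ensure the Hecke action matches correctly, but this is standard. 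Overall your proposal is a faithful outline of the existing proofs rather than a new route.
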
 

Note that $J_0^+(N) = J_0(N) / \langle u_N \rangle$, where we view $u_N$ as an endomorphism of $J_0(N)$ under the inclusion $\mathrm{Aut}(X_0(N)) \hookrightarrow \mathrm{End}(J_0(N))$. Similarly, $J_{ns}^+(p) = J_{ns}(p) / \langle w_p \rangle$. 

Of particular interest is the case when the ranks of $J_{ns}(p)$ and $J_{ns}^+(p)$ are equal. When the genus of $X_0(p)$ is $0$ (equivalently, when there are no newforms at level $p$, for example when $p=13$), the following lemma gives a concrete way of verifying this.

\begin{lemma}\label{equaljacrank} Suppose the genus of $X_0(p)$ is $0$, and that the rank of the abelian variety $\ker(u_{p^2}+1)$ is zero. Then $J_{ns}(p)$ and $J_{ns}^+(p)$ have equal rank.
\end{lemma}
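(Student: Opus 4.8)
\emph{Proof proposal.} The plan is to reduce the statement, via Chen's isogenies (Theorem \ref{Chen}), to a computation with the $\pm 1$-eigenspaces of the Atkin--Lehner involution $u_{p^2}$ on $J_0(p^2)$, and then to invoke the hypothesis on $\ker(u_{p^2}+1)$.

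First I would use the assumption that $X_0(p)$ has genus $0$. This means $S_2(\Gamma_0(p)) = 0$, so there are no newforms of weight $2$ and level $p$, and there are none of level $1$ either; since $1, p, p^2$ are the only divisors of $p^2$, every Hecke eigenform in $S_2(\Gamma_0(p^2))$ is in fact a newform of level $p^2$. Hence $J_0(p^2) \sim J_0(p^2)_{\mathrm{new}}$, and applying the same reasoning on the Atkin--Lehner quotient, $J_0^+(p^2) \sim J_0^+(p^2)_{\mathrm{new}}$. Theorem \ref{Chen} then gives $J_{ns}(p) \sim J_0(p^2)$ and $J_{ns}^+(p) \sim J_0^+(p^2)$, and since isogenous abelian varieties over $\Q$ have equal rank, it suffices to prove $\mathrm{Rk}(J_0(p^2)) = \mathrm{Rk}(J_0^+(p^2))$.

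Next I would use the standard decomposition of an abelian variety $A/\Q$ under an involution $w \in \mathrm{End}(A)$: the morphism $x \mapsto ((1+w)x,(1-w)x)$ is an isogeny $A \sim \ker(w-1)^0 \times \ker(w+1)^0$ (its kernel is killed by $2$). Taking $A = J_0(p^2)$ and $w = u_{p^2}$, and recalling that $J_0^+(p^2)$ is, up to isogeny, the $+1$-eigenspace $\ker(u_{p^2}-1)^0$ --- equivalently, that pullback along the quotient map $\pi \colon X_0(p^2) \to X_0^+(p^2)$ has finite kernel with image $(1+u_{p^2})J_0(p^2)$ --- we obtain
\[ \mathrm{Rk}(J_0(p^2)) = \mathrm{Rk}(J_0^+(p^2)) + \mathrm{Rk}\bigl(\ker(u_{p^2}+1)^0\bigr) = \mathrm{Rk}(J_0^+(p^2)) + \mathrm{Rk}\bigl(\ker(u_{p^2}+1)\bigr), \]
the last equality because an abelian variety and its identity component have the same rank. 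The hypothesis $\mathrm{Rk}(\ker(u_{p^2}+1)) = 0$ then yields $\mathrm{Rk}(J_0(p^2)) = \mathrm{Rk}(J_0^+(p^2))$, and combining with the previous paragraph, $\mathrm{Rk}(J_{ns}(p)) = \mathrm{Rk}(J_{ns}^+(p))$.

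The bookkeeping with newforms and the eigenspace splitting are routine. The one point that requires a little care --- and the place where the hypothesis genuinely enters --- is the identification of $J_0^+(p^2)$ with the $+1$-eigenspace of $u_{p^2}$ on $J_0(p^2)$, i.e.\ that $\pi^*$ has finite kernel and image $(1+u_{p^2})J_0(p^2)$; this is the main (if minor) obstacle, and everything else is formal manipulation of ranks under isogeny.
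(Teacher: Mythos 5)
Your proof is correct and follows essentially the same route as the paper: use the genus-zero hypothesis to replace $J_0(p^2)_{\mathrm{new}}$ by $J_0(p^2)$ (and similarly for the quotient), apply Chen's isogenies, split $J_0(p^2)$ into $\pm 1$-eigenspaces of $u_{p^2}$, and invoke the rank-zero hypothesis on $\ker(u_{p^2}+1)$. Your version is a bit more careful about identity components and about identifying $J_0^+(p^2)$ with the $+1$-eigenspace, but one small wording slip is worth flagging: the hypothesis of the lemma enters only in the final step, not in the identification of $J_0^+(p^2)$ with $\ker(u_{p^2}-1)^0$, which holds unconditionally.
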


\begin{proof} By assumption, $J_0(p^2)_{\mathrm{new}} \sim J_0(p^2)$ and $J_0^+(p^2)_{\mathrm{new}} \sim J_0^+(p^2)$. Then using Chen's isogenies (Theorem \ref{Chen}), $J_{ns}(p) \sim J_0(p^2)$ and $J_{ns}^+(p) \sim J_0^+(p^2) \sim J_0(p^2) / \langle u_{p^2} \rangle$. Since $u_{p^2}$ is an involution, we have  \[J_0(p^2) \sim \ker(u_{p^2}+1) \times \ker(u_{p^2}-1) \sim \ker(u_{p^2}+1) \times J_0^+(p^2). \] So \[\mathrm{Rk}(J_{ns}(p))-\mathrm{Rk}(J_{ns}^+(p))=\mathrm{Rk}(J_0(p^2))-\mathrm{Rk}(J_0^+(p^2)) = \mathrm{Rk}(\ker(u_{p^2}+1)). \]  \end{proof} By considering the $L$-function of the abelian variety $\ker(u_{p^2}+1)$ as above, we can try and check whether or not $\mathrm{Rk}(\ker(u_{p^2}+1))=0$.

\subsection{Quadratic Points}

We now turn our attention to quadratic points on $X_{ns}(p)$. Continuing with the same notation, we write $ \varrho : X_{ns}(p) \rightarrow X_{ns}^+(p)$ for the degree $2$ degeneracy map and $w_p$ for the modular involution on $X_{ns}(p)$. We would like to describe all quadratic points on $X_{ns}(p)$. We note \citep[p.~96]{domes} that  $X_{ns}(p)(\R) = \emptyset$, and so  all quadratic points will come in pairs, defined over some imaginary quadratic field. 

One way of obtaining quadratic points on $X_{ns}(p)$ is by pulling back rational points on $X_{ns}^+(p)$: if $P \in X_{ns}^+(p)(\Q)$ then $\varrho^*(P)$ is a pair of quadratic points on $X_{ns}(p)$. We call such quadratic points \emph{non-exceptional}. If on the other hand we have a quadratic point $Q \notin \varrho^* X_{ns}^+(p)(\Q)$, then we say that $Q$ is \emph{exceptional}. We recall here the main theorem of this paper.
\begingroup
\renewcommand\thetheorem{1.1}
\begin{theorem}[Main theorem]
There are no exceptional quadratic points on $X_{ns}(p)$ for $p = 7, 11,$ or $13$.
\end{theorem}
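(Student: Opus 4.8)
The plan is to treat the three primes separately, since the Mordell--Weil rank of $J_{ns}^+(p)$ controls which method applies. For $p=7$ and $p=11$, the curve $X_{ns}^+(p)$ has positive rank but $X_{ns}(p)$ itself, or a suitable quotient, should have a Jacobian whose rank is small enough to permit a Chabauty-type argument on the curve or on its symmetric square. For $p=13$, all of $X_{ns}(13)$, $X_{ns}^+(13)$ and the relevant abelian varieties have rank $0$ (this is where Lemma~\ref{equaljacrank} and the Kolyvagin--Logachev criterion come in: one checks $\mathrm{Rk}(\ker(u_{169}+1))=0$ via the $L$-value computation in \texttt{Magma}), so the rank of $J_{ns}(13)$ equals that of $J_{ns}^+(13)$, and a genuine symmetric-power Chabauty argument in the style of \citep{Symmchab, joshaquad} becomes available.

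The concrete steps I would carry out are as follows. First, for each $p \in \{7,11,13\}$, produce an explicit projective model for $X_{ns}(p)$ over $\Q$ together with explicit equations for the degeneracy map $\varrho$ and the modular involution $w_p$; the genus is $1,4,8$ respectively, so these are manageable. Second, determine the Mordell--Weil data: for $p=7$ one uses that $X_{ns}^+(7)\cong\mathbb{P}^1$ and that $J_{ns}(7)$ is an elliptic curve of rank $0$; for $p=11$ one analyzes $J_{ns}(11)\sim J_0(121)_{\mathrm{new}}$ via Chen's isogenies (Theorem~\ref{Chen}) and its decomposition into newform abelian varieties, checking $L$-values to pin down the ranks; for $p=13$ one invokes Lemma~\ref{equaljacrank}. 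Third, parametrize quadratic points: a quadratic point $Q$ on $X_{ns}(p)$ gives an effective degree-$2$ divisor $Q+\bar{Q}$, hence a rational point on the symmetric square $X_{ns}(p)^{(2)}$, and one studies the Abel--Jacobi image of such divisors. A non-exceptional point is exactly one of the form $\varrho^*(P)$ for $P\in X_{ns}^+(p)(\Q)$; since $X_{ns}(p)(\R)=\emptyset$, every quadratic point is imaginary and comes in a conjugate pair, which constrains the image in the Jacobian. Fourth, run the relevant Chabauty/Mordell--Weil-sieve computation: for $p=7,11$ one reduces modulo several small primes of good reduction, combining the rank-$0$ (or low-rank) information with the list of residue classes of effective degree-$2$ divisors, to show that every rational point on $X_{ns}(p)^{(2)}$ is pulled back from $X_{ns}^+(p)$; for $p=13$ one applies symmetric Chabauty, using that the rank equality forces the image of degree-$2$ divisors to lie in the image of $\varrho^*$ up to torsion, and then clears the torsion ambiguity with a mod-$\ell$ sieve. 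Finally, one matches the seven surviving pairs for $p=13$ with the seven rational points on $X_{ns}^+(13)$ from \citep{cursed}, producing Table~2.

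The main obstacle I expect is the $p=13$ case. Here positive-rank Chabauty on the curve itself is unavailable (the curve has genus $8$ and one cannot simply map down to a rank-$0$ situation on $X_{ns}^+(13)$ because that is exactly the map whose fibers we are trying to show exhaust the quadratic points), so one is forced into the symmetric-square setting, where the relevant Chabauty condition --- that the rank of $J_{ns}(13)$ be strictly less than the genus, \emph{and} that the Abel--Jacobi map on $X_{ns}(13)^{(2)}$ together with the pullback of $X_{ns}^+(13)(\Q)$ generate enough of the cotangent space --- needs careful verification. The delicate points are: checking that the differentials cutting out the Chabauty locus are non-degenerate at the relevant residue points (so that each residue disc contains at most the expected number of rational points), handling the possibility of extra points in discs where the naive Chabauty bound is not sharp by supplementing with a Mordell--Weil sieve, and making sure the torsion subgroup of $J_{ns}(13)(\Q)$ is computed correctly so that ``rank equality'' genuinely pins down the image of quadratic points. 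For $p=7$ and $p=11$ the computations, while not entirely routine, should go through by the more elementary sieve already used by Siksek for $p=7$; the novelty and the real work is concentrated in $p=13$.
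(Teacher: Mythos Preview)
Your overall architecture is right --- treat the three primes separately, with $p=13$ requiring symmetric-power Chabauty plus a Mordell--Weil sieve --- but there is a factual error in your $p=13$ discussion that would mislead the execution. You write that for $p=13$ ``all of $X_{ns}(13)$, $X_{ns}^+(13)$ and the relevant abelian varieties have rank $0$''. This is false: $J_{ns}^+(13)(\Q)$ has rank $3$ (this is \cite[Proposition~6.2]{cursed}), and hence so does $J_{ns}(13)(\Q)$. What Lemma~\ref{equaljacrank} actually gives you, via $\mathrm{Rk}(\ker(u_{169}+1))=0$, is the \emph{equality} $\mathrm{Rk}(J_{ns}(13))=\mathrm{Rk}(J_{ns}^+(13))=3$, not that either rank vanishes. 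You seem to half-recognise this later when you speak of ``rank equality'' and say one ``cannot simply map down to a rank-$0$ situation'', so the first paragraph is internally inconsistent. The distinction matters: with rank $0$ the argument would be almost trivial, whereas with rank $3$ the symmetric Chabauty criterion $r_X-r_C<g_X-g_C$ (here $0<5$) is what guarantees annihilating differentials in $\ker(\mathrm{Tr})$, and the rank equality $r_X=r_C$ is precisely what lets one compute $\widetilde{V_0}=(1-\widetilde{w}^*)(\Omega_{\widetilde{X}})$ without ever integrating.

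There is also a genuine difficulty you do not flag: one does not know generators for $J_{ns}(13)(\Q)$, only for a finite-index subgroup $G_{13}=\varrho^*G_{13}^+$ of unknown index $I$. The sieve therefore has to be run relative to $G_{13}$, and this requires knowing that $I$ is coprime to certain integers appearing in $\#(J(\mathbb{F}_p)/MJ(\mathbb{F}_p))$. The paper handles this with a separate $p$-saturation test on $G_{13}^+\subseteq J_{ns}^+(13)(\Q)$ together with a divisibility bound $I\mid 14I^+$; without this, the sieve cannot be justified. Your phrase ``clears the torsion ambiguity with a mod-$\ell$ sieve'' undersells the issue --- it is not a torsion ambiguity but an index ambiguity in a rank-$3$ group.

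For $p=7$ and $p=11$ your plan would likely work, but the paper's arguments are more elementary than the sieve you propose: for $p=7$ one shows $\mathrm{Pic}^0(X_{ns}(7)/\Q)=0$ (not just that $J_{ns}(7)(\Q)\cong\Z/2\Z$, but that the nontrivial class has no rational representative, via a pointless conic), after which a two-line Riemann--Roch argument finishes; for $p=11$ one uses the explicit maps from $X_{ns}(11)$ to the four elliptic curves of conductor $121$ (three of which have trivial Mordell--Weil group) to force the coordinates of any quadratic point to be rational, with the last step reducing to showing a certain genus-$2$ curve has no rational points. No reduction modulo primes or residue-disc analysis is needed for either of these.
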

\endgroup

As discussed in the introduction, quadratic points on the curves $X_0(N)$ of small genus are well understood. Due to the similarities between $X_0(p)$ and $X_{ns}(p)$, one might hope to be able to apply similar techniques to those in \citep{ozmansiksek, joshaquad} to study quadratic points on $X_{ns}(p)$. The equations for the curves $X_{ns}(7)$ and $X_{ns}(11)$ are relatively simple and Theorem \ref{mainthm} can be proved for these curves fairly directly. The curve $X_{ns}(13)$ is of genus $8$ and its equations are naturally much more complicated. For this case we use the method of Chabauty for symmetric powers of curves developed in \citep{Symmchab}. The application of this method is similar to how it is used in \citep{joshaquad} for some curves $X_0(N)$. The main difference in the case of $X_{ns}(13)$ is the inability to get a handle on the Mordell--Weil group of its Jacobian. 

\section{Quadratic Points on $X_{ns}(7)$}

The curve $X_{ns}(7)$ is of genus $1$ and the curve $X_{ns}^+(7)$ is of genus $0$. An affine model for $X_{ns}(7)$ is given in \citep[p.~24]{zywina} by the following equation: \[y^2=-(2x^4-14x^3+21x^2+28x+7),\] with the map $\varrho: X_{ns}(7) \rightarrow X_{ns}^+(7) \cong \mathbb{P}^1$ given by $(x,y) \mapsto x$. Homogenising gives the following model in weighted projective space $\mathbb{P}_{(1,2,1)}$: \[ Y^2=-(2X^4-14X^3Z+21X^2Z^2+28XZ^3+7Z^4). \]  We apply the change of coordinates $ (X:Y:Z) \mapsto \left(\frac{3X}{2}:Y:-\frac{3X}{2}+2Z \right) $ to obtain a new model for $X_{ns}(7)$ in $\mathbb{P}_{(1,2,1)}$ with equation \[ Y^2=-81X^4+189X^2Z^2-112Z^4. \] The map $\varrho$ in these new coordinates is given by $\varrho(X:Y:Z)=(3X/2 : -3X/2+2Z).$ Let $P \coloneqq (u:v:w)$ be a quadratic point  on this model. The points at infinity on this model are given by $\infty_+ = (1:9i:0)$ and $\infty_- = (1:-9i:0)$, where $i$ denotes $\sqrt{-1}$. As the pair of points at infinity are non-exceptional, we can assume that $w = 1$. Then it will be enough to show that $u \in \Q$ to show that $P$ is non-exceptional, as if $u \in \Q$, we have $\varrho(P)=(3u/2:3u/2+2) \in \mathbb{P}^1(\Q) \cong X_{ns}^+(7)(\Q)$. 

The reason for using this new model is that it is a double cover of the conic $\mathcal{C} \subseteq \mathbb{P}^2$ given by $Y^2=-81X^2+189XZ-112Z^2$; the covering map being $(X,Y,Z) \mapsto (X^2,Y,Z^2)$. Note that $\mathcal{C}(\Q)=\emptyset$. We will use this to better understand $X_{ns}(7)$. We follow the proof of Siksek \citep{samirnote} to show that $P$ is non-exceptional.

\begin{lemma} The group $\mathrm{Pic}^0(X_{ns}(7)/\Q)=0$.
\end{lemma}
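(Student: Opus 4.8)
The plan is to exhibit $\mathrm{Pic}^0(X_{ns}(7)/\mathbb{Q})$ as a subgroup of $J_{ns}(7)(\mathbb{Q})$, pin down the latter using Chen's isogenies, and then rule out the one possibly‑nontrivial class by exploiting the double cover $X_{ns}(7)\to\mathcal{C}$ onto the pointless conic. Since $H^1(\mathbb{Q},\mathbb{G}_m)=0$, the natural map $\mathrm{Pic}^0(X_{ns}(7)/\mathbb{Q})\hookrightarrow J_{ns}(7)(\mathbb{Q})$ is injective (with cokernel inside $\mathrm{Br}(\mathbb{Q})$). By Chen's isogenies (Theorem~\ref{Chen}), $J_{ns}(7)\sim J_0(49)_{\mathrm{new}}$, and since there are no cusp forms of level $1$ or $7$ this is all of $J_0(49)$, i.e.\ the elliptic curve $X_0(49)$ of conductor $49$. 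I would verify in \texttt{Magma} that $L(J_{ns}(7),1)\neq 0$, so that by Kolyvagin--Logachev \citep{klthm} the rank of $J_{ns}(7)$ is $0$, and then compute that $J_{ns}(7)(\mathbb{Q})\cong\mathbb{Z}/2\mathbb{Z}$. Hence $\mathrm{Pic}^0(X_{ns}(7)/\mathbb{Q})$ is either trivial or $\mathbb{Z}/2\mathbb{Z}$, and it remains to exclude the latter.

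The key observation is that $X_{ns}(7)$ carries the Galois‑stable effective divisor $\infty_+ + \infty_-$ of degree $2$ (the divisor cut out by $Z=0$ on our model). Thus $\mathrm{Pic}^2(X_{ns}(7)/\mathbb{Q})$ is a non‑empty torsor under $\mathrm{Pic}^0(X_{ns}(7)/\mathbb{Q})$, so the two groups have the same cardinality. On a genus‑$1$ curve, Riemann--Roch shows that every degree‑$2$ divisor class is base‑point‑free with $h^0=2$, so each class in $\mathrm{Pic}^2(X_{ns}(7)/\mathbb{Q})$ defines a degree‑$2$ morphism $X_{ns}(7)\to\mathbb{P}^1$ over $\mathbb{Q}$, hence a $\mathbb{Q}$‑rational involution $\iota$ with quotient $X_{ns}(7)/\langle\iota\rangle\cong\mathbb{P}^1_{\mathbb{Q}}$; conversely such an $\iota$ recovers its class as the pullback of a rational point, and one checks these assignments are mutually inverse. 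Therefore $\#\mathrm{Pic}^0(X_{ns}(7)/\mathbb{Q})$ equals the number of involutions $\iota\in\mathrm{Aut}_{\mathbb{Q}}(X_{ns}(7))$ whose quotient is $\mathbb{P}^1$ over $\mathbb{Q}$.

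To count these, observe that $X_{ns}(7)$ becomes isomorphic over $\overline{\mathbb{Q}}$ to the elliptic curve $J_{ns}(7)_{\overline{\mathbb{Q}}}$, whose $j$‑invariant is neither $0$ nor $1728$; so its geometric automorphism group consists of the translations together with the involutions $Q\mapsto R-Q$ (for a choice of origin), the latter each having four fixed points. Galois descent for the twist defining $X_{ns}(7)$ leaves precisely the translations and involutions indexed by $J_{ns}(7)(\mathbb{Q})=\{O,P_0\}$, so there are exactly three involutions over $\mathbb{Q}$: the fixed‑point‑free translation $t_{P_0}$; the modular involution $w_7$; and the deck transformation $\tau$ of the cover $X_{ns}(7)\to\mathcal{C}$, which on our model is $(X:Y:Z)\mapsto(X:Y:-Z)$ and has four fixed points (consistent with Riemann--Hurwitz for a degree‑$2$ map from a genus‑$1$ to a genus‑$0$ curve). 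Now $X_{ns}(7)/\langle t_{P_0}\rangle$ has genus $1$; $X_{ns}(7)/\langle w_7\rangle=X_{ns}^+(7)\cong\mathbb{P}^1$; and $X_{ns}(7)/\langle\tau\rangle=\mathcal{C}$, which is \emph{not} isomorphic to $\mathbb{P}^1$ over $\mathbb{Q}$ since $\mathcal{C}(\mathbb{Q})=\emptyset$. So $w_7$ is the only involution with a $\mathbb{P}^1$ quotient, and hence $\mathrm{Pic}^0(X_{ns}(7)/\mathbb{Q})$ is trivial.

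The main obstacle is the last paragraph: establishing cleanly that the $\mathbb{Q}$‑rational involutions are exactly $t_{P_0}$, $w_7$, and $\tau$ — in particular that $\tau$ is an ``inversion'', not a translation, so that its quotient is the genus‑$0$ curve $\mathcal{C}$ rather than a genus‑$1$ curve. This is where the hypothesis $X_{ns}(7)(\mathbb{R})=\emptyset$ (which forces $X_{ns}(7)(\mathbb{Q})=\emptyset$ and $\mathcal{C}(\mathbb{Q})=\emptyset$) genuinely enters: it is precisely the non‑splitness of $\mathcal{C}$ that makes $\mathrm{Pic}^0(X_{ns}(7)/\mathbb{Q})$ strictly smaller than $J_{ns}(7)(\mathbb{Q})$, and this structural input, rather than any numerical estimate, is the heart of the argument.
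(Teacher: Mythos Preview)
Your argument is correct and takes a genuinely different route from the paper's. The paper works directly with the candidate nontrivial class: it checks that $2[\infty_+-\infty_-]=0$, assumes $[\infty_+-\infty_-]$ is represented by a rational divisor $D'$, uses Riemann--Roch to write $D'+\infty_++\infty_-\sim Q+Q'$ with $Q,Q'$ Galois-conjugate, and then computes explicitly in $\mathcal{L}(2\infty_+)$ over $\Q(i)$ (basis $\{1,\,y+9ix^2\}$) to force $(x(Q)^2,y(Q))\in\mathcal{C}(\Q)=\emptyset$. You instead set up a bijection between $\mathrm{Pic}^2(X_{ns}(7)/\Q)$ and $\Q$-rational involutions with quotient $\mathbb{P}^1_{\Q}$, then classify the three $\Q$-involutions and observe that only $w_7$ qualifies since $X_{ns}(7)/\langle\tau\rangle=\mathcal{C}$ is pointless. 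Both proofs hinge on $\mathcal{C}(\Q)=\emptyset$; the paper's is hands-on and needs no automorphism classification, while yours is more conceptual and makes transparent that the defect between $\mathrm{Pic}^0(X_{ns}(7)/\Q)$ and $J_{ns}(7)(\Q)$ is precisely the Brauer class of $\mathcal{C}$. Your stated worry in the last paragraph dissolves on inspection of the model: $\tau$ fixes $\infty_\pm$, so it is not a translation, and $w_7\tau:(X:Y:Z)\mapsto(X:-Y:-Z)$ is fixed-point-free on the curve, hence must be $t_{P_0}$; thus the three involutions are exactly $w_7$, $\tau$, and $t_{P_0}$ as you claim.
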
 

\begin{proof}  Since $X$ has genus $1$, $J_{ns}(7)$ is an elliptic curve. By Chen's isogeny (Theorem \ref{Chen}), $J_{ns}(7) \sim J_0(49)_{\mathrm{new}}$, so $J_{ns}(7)$ must be an elliptic curve of conductor $49$. There are four elliptic curves of conductor $49$, each of which has Mordell--Weil group $\Z / 2\Z $, so we must have $J_{ns}(7)(\Q) \cong \Z / 2\Z $. In fact one can verify that $J_{ns}(7)$ is the elliptic curve with Cremona reference 49A2.

From the inclusion $\mathrm{Pic}^0(X_{ns}(7)/\Q) \hookrightarrow J_{ns}(7)(\Q) \cong \Z / 2\Z $, we know that $\mathrm{Pic}^0(X_{ns}(7)/ \Q)=0$ or $\Z / 2 \Z$. To prove that $\mathrm{Pic}^0(X_{ns}(7)/ \Q)=0$, we must show that the non-trivial rational divisor class of degree $0$ cannot be represented by a rational divisor. We work on the affine patch $Z=1$. Our model on this affine patch is given by the equation  \[ y^2=-81x^4+189x^2-112. \] The non-trivial rational divisor class of degree $0$ can be represented by the divisor $D \coloneqq\infty_{\scriptscriptstyle+} - \infty_{\scriptscriptstyle-}$, since a straightforward check shows that $\mathrm{div(g)}=2D$, where \[ g \coloneqq 2y+18ix^2+21i. \]  Suppose for a contradiction that $D \sim D'$ with $D'$ a rational degree $0$ divisor. By Riemann-Roch, $D'+\infty_{\scriptscriptstyle +}  + \infty_{\scriptscriptstyle-}$ is lineary equivalent to an effective degree $2$ rational divisor, say \[ D'+\infty_{\scriptscriptstyle +}  + \infty_{\scriptscriptstyle-} \sim Q+Q',\] where $Q$ is a quadratic point and $Q'$ its Galois conjugate. Then \begin{equation}\label{linequiv} Q+Q'-2\infty_{\scriptscriptstyle +} \sim D' -(\infty_{\scriptscriptstyle+}-\infty_{\scriptscriptstyle-}) \sim D'-D \sim 0.\end{equation} Since $\infty_{\scriptscriptstyle+} \in X_{ns}(7)(\Q(i))$, by (\ref{linequiv}) there exists $f \in \Q(i)(X_{ns}(7))$  such that $\mathrm{div}(f)=Q+Q'-2\infty_{\scriptscriptstyle+}$. So $f \in  \mathcal{L}(2\infty_{\scriptscriptstyle+})$ and $f(Q)=f(Q')=0$. The elements $1,y+9ix^2$ form a  $\Q(i)$-basis for $ \mathcal{L}(2\infty_{\scriptscriptstyle+})$, so after rescaling we can write $f=y+9ix^2+\alpha$ for some $\alpha \in \Q(i)$. The vanishing locus of $f$ on the curve $X_{ns}(7)$ is given by \[ x^2= \frac{-112-\alpha^2}{18i\alpha-189}, \qquad y = \frac{112i+i\alpha^2}{18i\alpha-189}-\alpha. \] So $x(Q)^2=x(Q')^2$ and $y(Q)=y(Q')$. As $Q$ and $Q'$ are Galois conjugates, we must have that $x(Q)^2,y(Q) \in \Q$. This means that $(x(Q)^2,y(Q))\in \mathcal{C}(\Q)$, but $\mathcal{C}(\Q) = \emptyset$, a contradiction. 
\end{proof}

Using this Lemma, the proof of Theorem \ref{mainthm} for $p=7$ follows quickly.  On the affine patch $Z=1$, we have $P=(u,v)$, and we write $P' \coloneqq (u',v')$ for the quadratic conjugate of $P$. Then $D \coloneqq P+P'$ is a rational effective degree $2$ divisor, and $\infty_+ + \infty_-$ is too. So $D-\infty_+-\infty_- \sim 0 $ because $\mathrm{Pic}^0(X_{ns}(7)/\Q)=0$. It follows that there is an $h \in \mathcal{L}(\infty_+ + \infty_-)$ satisfying $h(P)=h(P')=0$. Since $X$ has genus $1$, $l(\infty_+ + \infty_-)=2$ and $\{1,x\}$ is a $\Q$-basis for this Riemann-Roch space. This means that we can write, after rescaling, $h=x-\alpha$ for some $\alpha \in \Q$. Then $0=h(P)=u-\alpha$, so $u=\alpha \in \Q$ as required.

\section{Quadratic Points on $X_{ns}(11)$}

The curve $X_{ns}(11)$ is of genus $4$ and a model for this curve is given in \citep[p.~97]{domes} by the following equations in $\mathbb{A}^3_{x,y,t}$:
\begin{align*} y^2+y & =  x^3-x^2-7x+10,\\
t^2 & = -(4x^3+7x^2-6x+19).
\end{align*}
Each of these two equations defines an elliptic curve in its own right. The first of the two equations is in fact the defining equation of $X_{ns}^+(11)$; that is, the curve $X_{ns}^+(11)$ is the elliptic curve defined by
\[ y^2+y=x^3-x^2-7x+10. \]
This curve has rank $1$ and its rational points form an infinite cyclic group generated by the point $(4,-6)$.

The map $\varrho: X_{ns}(11)\rightarrow X_{ns}^+(11)$ is then given by $(x,y,t) \mapsto (x,y)$. This means that to verify Theorem \ref{mainthm} for $p=11$, it will suffice to show that if $(u,v,w) \in X_{ns}(11)$ is a quadratic point, then $u,v \in \mathbb{Q}$; with the slight caveat that we must consider the points at infinity which are not on this affine patch, but these are indeed non-exceptional.

The way we show Theorem \ref{mainthm} holds for $p=11$ is to use the maps from $X_{ns}(11)$ down to various elliptic curves, namely the elliptic curves over $\Q$ with conductor $121$. By Chen's isogeny, we know that $J_{ns}(11) \sim J_0(11^2)_{\mathrm{new}}$. At level $121$ there are four Galois-conjugacy classes of newforms, and to each of these is associated an elliptic curve of conductor $121$, so \[J_{ns}(11) \sim J_0(121)_{\mathrm{new}} \sim A_1 \times A_2 \times A_3 \times A_4,\] the product of these four elliptic curves. Here, $A_2$ is the elliptic curve of rank $1$ isomorphic to $X_{ns}^+(11)$, $A_3$ is the elliptic curve mentioned above with defining equation \[ t^2 = -(4x^3+7x^2-6x+19) \] which has trivial Mordell--Weil group; we denote this curve by $E$. Finally $A_1$ and $A_4$ also have trivial Mordell--Weil group. Moreover, \citep[pp.~100-101]{domes} gives the maps to these elliptic curves. 

\begin{lemma} Let $P \coloneqq (u,v,w) \in X_{ns}(11)$ be a quadratic point. Then $u \in \Q$.
\end{lemma}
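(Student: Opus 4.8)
The plan is to push $P$ down to the conductor-$121$ elliptic curve $E$ with equation $t^2=-(4x^3+7x^2-6x+19)$ via the degree $2$ map $\phi\colon X_{ns}(11)\to E$, $(x,y,t)\mapsto(x,t)$. Two features make this map useful: it is defined over $\Q$, and it records the $x$-coordinate of $P$. Since $E$ has trivial Mordell--Weil group, we can then run the familiar argument that a quadratic point on $E$ and its Galois conjugate must sum to the identity, which pins down their common $x$-coordinate.

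In detail, let $K$ be the imaginary quadratic field over which $P$ is defined, let $\sigma$ generate $\Gal(K/\Q)$, and set $\overline P\coloneqq\sigma(P)=(\overline u,\overline v,\overline w)$. As $P$ lies on the affine patch, $\phi(P)=(u,w)$ is an affine point of $E(K)$; in particular $\phi(P)\neq O$, so it does not lie in $E(\Q)=\{O\}$, and its conjugate is $\phi(\overline P)=\sigma(\phi(P))=(\overline u,\overline w)$. The point $\phi(P)\oplus\phi(\overline P)\in E(\overline{\Q})$ is fixed by $\sigma$, which merely swaps the two summands, and hence lies in $E(\Q)=\{O\}$; therefore $\phi(\overline P)=-\phi(P)$. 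Negation on $E$ acts by $(x,t)\mapsto(x,-t)$, fixing the $x$-coordinate, so $\overline u=x(\phi(\overline P))=x(-\phi(P))=x(\phi(P))=u$, i.e.\ $u\in\Q$.

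One may phrase the last step in closer parallel with the $p=7$ proof: the effective rational divisor $D'\coloneqq\phi(P)+\phi(\overline P)$ of degree $2$ on $E$ satisfies $D'\sim 2O$, because $\mathrm{Pic}^0(E/\Q)\cong E(\Q)$ is trivial. Riemann--Roch then produces an $f\in\mathcal{L}(2O)$ with $f(\phi(P))=0$, and since $\{1,x\}$ is a $\Q$-basis of $\mathcal{L}(2O)$ (the function $x$ has a double pole at $O$), after rescaling $f=x-\alpha$ with $\alpha\in\Q$; then $0=f(\phi(P))=u-\alpha$ gives $u=\alpha\in\Q$.

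There is no serious obstacle internal to this lemma: the only points requiring care are that $\phi$ really is defined over $\Q$ and compatible with the coordinate $x$ (without the latter, the vanishing of $\mathrm{Rk}(E)$ would constrain only $w$), that $E(\Q)=\{O\}$ (which follows from the earlier discussion of the four conductor-$121$ newforms together with the theorem of Kolyvagin--Logachev, or directly from the Cremona tables), and that we may restrict to the affine patch since the finitely many points at infinity of this model are non-exceptional and treated separately. The substantive work for $p=11$ lies beyond this lemma, in upgrading $u\in\Q$ to $v\in\Q$: this will require the analogous rank-zero input coming from the remaining factors $A_1$ and $A_4$ of $J_{ns}(11)$.
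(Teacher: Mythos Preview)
Your proof is correct and takes essentially the same approach as the paper: both push $P$ to the rank-zero elliptic curve $E$ via $(x,y,t)\mapsto(x,t)$ and use $E(\Q)=\{\infty_E\}$ to force $u\in\Q$. Your second paragraph is essentially the paper's argument verbatim (the Riemann--Roch computation in $\mathcal{L}(2\infty_E)$), while your first paragraph is the same content phrased directly via the group law on $E$.
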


\begin{proof}

Write $\Q(P)=\Q(\sqrt{d})$ with $d$ a squarefree integer, and write $P'=(u',v',w')$ for the quadratic conjugate of $P$.
 
To show $u \in \Q$, we consider the map $\psi: X_{ns}(11)\rightarrow E$ given by $(x,y,t) \mapsto (x,t)$. Write $D \coloneqq P + P'$ for the rational effective degree $2$ divisor on $X_{ns}(11)$, and $\psi(D)=(u,w)+(u',w')$ for its pushforward by $\psi$, which is a rational effective degree $2$ divisor on the elliptic curve $E=A_3$. Since $E(\Q)=\{ \infty_{\scriptscriptstyle E} \}$, $\mathrm{Pic}^0(E)=0$, so $\big[\psi(D)-2\infty_{\scriptscriptstyle E}  \big]=[0]$.  So $\psi(D) \sim 2\infty_{\scriptscriptstyle E}$. Now $\psi(D) \neq 2\infty_{\scriptscriptstyle E}$ because $(u,w)$ is an  affine point of the curve, so we have a non-constant function $h \in \Q(E)$ satisfying \[ \mathrm{div}(h) = \psi(D)-2 \infty_{\scriptscriptstyle E}.\] So $h \in \mathcal{L}(2 \infty_{\scriptscriptstyle E})$ with $h(u,w)=h(u',w')=0$. Now $1,x \in  \mathcal{L}(2 \infty_{\scriptscriptstyle E})$ form a $\Q$-basis for this Riemann-Roch space, so we can write (after rescaling) $h=x-\alpha$ for some $\alpha$ in $\Q$. Then $0 = h(u,w) = u - \alpha$, so $u \in \Q$.
\end{proof}

Next we aim to show that $v \in \Q$. Since $E(\Q)=\{ \infty_{\scriptscriptstyle E} \}$, we know that $w$ cannot also be rational because $u \in \Q$, so $w \in \Q(\sqrt{d}) \backslash \Q$. As $\Q(P)=\Q(\sqrt{d})$ we must have $v \in \Q(\sqrt{d})$. Suppose for a contradiction that $v \notin \Q$. As $P \in X_{ns}(11)$, we know that $v$ and $w$ satisfy the following two equations; which we view as quadratic equations in $v$ and $w$ respectively: \[ v^2+v -(u^3-u^2-7u+10) = 0, \qquad  w^2  +(4u^3+7u^2-6u+19) = 0 .\]
Since $v,w \in \Q(\sqrt{d}) \backslash \Q$, the discriminant of each of these equations is of the form $da^2$ for some non-zero rational number $a$; say $da_1^2$ and $da_2^2$ for the first and second equations respectively. In particular, by considering the product of the discriminants, we see that $(da_1a_2,u)$ is a rational point on the curve with affine equation in $\mathbb{A}^2_{r,s}$, \[ r^2 = (4s^3 - 4s^2 - 28s + 41)(-16s^3 - 28s^2 + 24s - 76). \] Replacing $r$ by $2r$, we have an affine rational point on the curve which we denote $H$, with affine equation \[ r^2 = -(4s^3 - 4s^2 - 28s + 41)(4s^3 + 7s^2 - 6s + 19). \]
This curve is a hyperelliptic curve of genus $2$ and already appears in \citep[p.~100]{domes}, since the maps from $X_{ns}(11)$ to both $A_1$ and $A_4$ factor through $H$. To obtain our desired contradiction it will be enough to show $H(\Q) = \emptyset$. Denote by $\pi$ the map (over $\Q$) from $H$ to $A_1$, and write $\infty_{\scriptscriptstyle A_1}$ for the point at infinity on the affine Weiestrass model for $A_1$ given in \citep[p.~99]{doublecover}. Since $A_1(\Q)=\{\infty_{\scriptscriptstyle A_1}\}$,  we have that  $H(\Q) \subseteq \pi^{-1}\{\infty_{\scriptscriptstyle A_1}\}=\{{\infty_+,\infty_-}\}$, where $\infty_+=(4i : 1 : 0)$ and  $\infty_-=(-4i : 1 : 0)$ are the two points at infinity on $H$, neither of which is rational, so we conclude that $H(\Q)= \emptyset$. So $v \in \Q$, meaning that $P \in \varrho^*(X_{ns}(11)(\Q)$. This proves Theorem \ref{mainthm} in the case $p=11$.

\section{Quadratic Points on $X_{ns}(13)$}

In order to prove Theorem \ref{mainthm} in the case $p=13$ we will use the method of Chabauty for symmetric powers of curves. 

We first change how we view quadratic points. Given a smooth projective curve $X$ over $\Q$, we write $X^{(2)}$ for its symmetric square. The reason we consider $X^{(2)}$ is because we can view a pair of quadratic points $(P,\overline{P})$ on $X$ as a \emph{rational} point on $X^{(2)}$, and we can simply write $P \in X^{(2)}(\Q)$. A simple way of representing this point is as a degree $2$ effective rational divisor $D \coloneqq P + \overline{P} \in X^{(2)}(\Q)$. From now on, when we speak of quadratic points, or rational points on $X^{(2)}$, or of degree $2$ effective rational divisors, we really mean the same thing. We can rephrase Theorem \ref{mainthm} as $ X_{ns}^{(2)}(p)(\Q)=\varrho^{-1}(X_{ns}^+(p)(\Q))$ for $p =7,11,$ or $13$.

Moreover, if $X^{(2)}(\Q) \neq \emptyset$, then after fixing a basepoint, say $\infty \in X^{(2)}(\Q)$, we have the Abel--Jacobi map \begin{align*} \iota: X^{(2)}(\Q) & \hooklongrightarrow J(X)(\Q) \\
Q & \longmapsto \big[Q-\infty\big]. \end{align*} This allows us to use the Jacobian of the curve $X$ to study quadratic points.

The strategy we use consists of two main steps: \begin{itemize}

\item Chabauty step: for various primes $p$, try and show that the non-exceptional quadratic points are alone in their mod $p$ residue discs. 
\item Sieving step: sieve for unknown quadratic points, hoping to obtain a contradiction. The information obtained from the Chabauty step is used here.
\end{itemize}

Here, the \emph{mod $p$ residue disc} of a point $Q \in X^{(2)}(\Q)$ consists of points $P \in X^{(2)}(\Q)$ such that $\widetilde{P}=\widetilde{Q}$, where $\sim$ denotes reduction modulo $p$.
For computational reasons we start by finding a new model for $X_{ns}(13)$ for which the modular involution is diagonalised (in a sense which is made precise below). We then describe the Chabauty and sieving steps, and see how they are used to complete the proof of Theorem \ref{mainthm}. Finally we see how a saturation test is used to verify certain conditions required in the sieving step.

\subsection{Obtaining a New Model}

As a starting point we use the equations for $X_{ns}(13), X_{ns}^+(13)$, and $\varrho$ from \citep{doublecover}. We first find equations for the modular involution, $w_{13}$, and then use this to obtain a new model. A model for $X_{ns}(13)$ is given by $15$ degree $2$ equations which cut out the curve in $\mathbb{P}^7$. It is a curve of genus $8$. The equations are first obtained via the canonical embedding and then simplified to obtain small coefficients using an LLL algorithm. This model is smooth at all primes $2 \leq p \leq 97$, other than $p=13$. A model for the genus $3$ curve $X^+_{ns}(13)$ is given by the following degree $4$ equation in $\mathbb{P}^2$: \[
 (-Y-Z)X^3+(2Y^2+ZY)X^2  +(-Y^3+ZY^2-2(Z^2)Y+Z^3)X+(2Z^2Y^2-3Z^3Y) = 0. 
\] Finally,  \begin{align*}\varrho: X_{ns}(13) & \longrightarrow X_{ns}^+(13) \\ 
(x_1: \dots :x_8) & \longmapsto (-3x_1+2x_2 :-3x_1+x_2+2x_4-2x_5:x_1+x_2+x_4-x_5).
\end{align*} 

The curve $X^+_{ns}(13)$ is now known to have precisely seven rational points, thanks to the important paper \citep{cursed}, which uses the `quadratic Chabauty' method. By pulling back these points, we obtain seven pairs of non-exceptional quadratic points on $X_{ns}(13)$.

Baran finds an explicit form for the $j$-map on $X_{ns}^+(13)$ in \citep[pp.~295-300]{Baran13}. Each of the seven points is a CM-point. The seven rational points, which we denote $P_1, \dots, P_7$, along with the CM fields of the corresponding elliptic curves  are displayed in Table 1. The CM fields of the rational points are precisely the quadratic fields over which their pullbacks are defined.
\begingroup
\renewcommand*{\arraystretch}{1.35}
\begin{table}[ht!]\label{Tab1}
\begin{center}
\begin{tabular}{ |c|c|c| } 
\hline
$P$ & Coordinates & CM \\
\hline
$P_1$ & $(0:1:0)$ & $-11$ \\ 
\hline
$P_2$ & $(0:0:1)$ & $-67$ \\ 
\hline
$P_3$ & $(-1:0:1)$ & $-7$ \\ 
\hline
$P_4$ & $(1:0:0)$ & $-2$ \\ 
\hline
$P_5$ & $(1:1:0)$ & $-19$ \\ 
\hline
$P_6$ & $(0:3:2)$ & $-163$ \\ 
\hline
$P_7$ & $(1:0:1)$ & $-7$ \\ 
\hline
\end{tabular}
\end{center}
\caption{Rational points on $X_{ns}^+(13)$}
\vspace{-5mm}
\end{table} \endgroup \kern-0.9em

The modular involution interchanges each pair of quadratic points, and so we know the images of these fourteen points under $w_{13}$. Using some linear algebra we obtain an involution on the curve defined over $\mathbb{Q}$, and since the modular involution is the unique involution on the curve, we have \[ w_{13}  (x_1 : \dots : x_8) =  (x_1 : x_2 : x_1-x_2-x_3 : -x_5 : -x_4 : x_1-x_6 : x_2-x_7 : x_1-x_8). \]

We would like to change coordinates so that our modular involution is in a nicer form. As mentioned above, this is purely for computational purposes. To do this, we diagonalise the matrix associated to the modular involution and change coordinates appropriately. The matrix of $w_{13}$ has eigenvalues $1$ and $-1$, with multiplicities $3$ and $5$ respectively. Let $T$ be the change of basis matrix satisfying $Tw_{13}T^{-1} = M$, where $M \coloneqq \mathrm{Diag}(1,1,1,-1,-1,-1,-1,-1)$. We then change coordinates with the matrix $T^{-1}$. The coordinate change induced by the matrix $T^{-1}$ is  \[ (x_1 : \dots : x_8)  \mapsto  \left( x_2 : x_3 : \frac{x_2-x_3-x_5}{2} : \frac{x_1+x_8}{2}: \frac{-x_1+x_8}{2}:  \frac{x_2-x_6}{2} : \frac{x_3-x_7}{2} : \frac{x_2-x_4}{2} \right). \]  
As we can see from the denominators appearing in the coordinate change, this introduces $2$ as a prime of bad reduction on our new model. This will not be an issue. Applying this coordinate change to the equations of our curve and to the map $\varphi$ gives us our new model and new map to $X_{ns}^+(13)$, with the model for $X_{ns}^+(13)$ left unchanged. These equations are given in the Appendix.

We now pull back the seven rational points on $X_{ns}^+(13)$ to obtain seven pairs of quadratic points on $X_{ns}(13)$. We denote these by $Q_1, \dots, Q_7$, and their coordinates are listed in Table 2. Our aim is to show that this list is complete.
\begingroup
\renewcommand*{\arraystretch}{2}
\begin{table}[ht!]
\begin{center}
\begin{tabular}{ |c|c|c|c|c| } 
 \hline
 $Q_i$ & $\theta^2$ & Coordinates & CM  & $\varrho(Q_i)=P_i$ \\
 \hline 
$Q_1$ & $-11$ &  $\left( -\frac{5 \theta}{13} : \frac{2 \theta}{13} : \frac{3 \theta}{13} : 0 : -1 : -2 : 1 : 1 \right)$ & $-11$ & $(0:1:0) $ \\ 
\hline
$Q_2$ & $-67$ & $\left(\frac{3 \theta}{13} :\frac{4 \theta}{13} : \frac{6 \theta}{13} : 0 : 4 : -4 : -2 : 1\right)$ & $-67$ & $(0:0:1)$ \\ 
\hline
$Q_3$ & $-7$ & $\left(\frac{7 \theta}{13} : 
    \frac{5 \theta}{13} : \frac{ \theta}{13} : -1 : 0 : -1 : 1 : 1     \right)$ & $-7$ & $(-1:0:1)$ \\
\hline
$Q_4$ & $-2$ & $\left( \frac{4 \theta}{13} : \frac{ \theta}{13} : -\frac{5 \theta}{13} : 0 : 0 : 1 : 0 : 0    \right)$ & $-2$ & $(1:0:0)$ \\
\hline
$Q_5$ & $-19$ & $\left(  \frac{ \theta}{13} : -\frac{3 \theta}{13}
: \frac{2 \theta}{13} : 1 : 1 : 1 : 0 : 1   \right)$ & $-19$ & $(1:1:0)$ \\
\hline
$Q_6$ & $-163$ & $\left(  \frac{3 \theta}{13}: \frac{2 \theta}{91}: \frac{3 \theta}{91} : -\frac{12}{7} : -\frac{5}{7} : -\frac{10}{7} : \frac{25}{7} : 1   \right)$ & $-163$ & $(0:3:2)$ \\
\hline
$Q_7$ & $-7$ & $\left(   -\frac{ \theta}{13} : \frac{3 \theta}{13} : \frac{11 \theta}{13} : 1 : 0 : -3 : -1 : 1  \right)$ & $-7$ & $(1:0:1)$ \\
\hline
\end{tabular}
\end{center}
\caption{Quadratic points on $X_{ns}(13)$}
\vspace{-5mm}
\end{table}
\endgroup

We also record here some information about the Jacobians, $J_{ns}(13)$ and $J_{ns}^+(13)$, of these curves. By \citep[Proposition~6.2]{cursed} we know that $J_{ns}^+(13)(\Q)$ has rank $3$. Moreover, by \citep[p.~60]{genus3},  $J_{ns}^+(13)(\Q)$ has trivial torsion, and the divisors  \[ \Delta_i \coloneqq \left[ P_i-P_4 \right], \qquad i=1,2,3,\] generate a full rank subgroup, which we denote $G^+_{13} \subseteq J_{ns}^+(13)(\Q)$. Writing $u_{169}$ for the Atkin--Lehner involution on $X_0(169)$, we verify that the rank of $\ker(u_{169}+1)$ is $0$, so $J_{ns}(13)(\Q)$ also has rank $3$ by Lemma \ref{equaljacrank}, as there are no newforms at level $13$. Therefore, pulling back the divisors $\Delta_i$ to $J_{ns}(13)(\Q)$ will give rise to a  full rank subgroup, $G_{13}$, of this Jacobian. We write $D_i \coloneqq \varrho^*(\Delta_i)$, so that $G_{13}= \langle D_1,D_2,D_3 \rangle$. We write $I$ for the index of $G_{13}$ in $J_{ns}(13)(\Q)$, and $I^+$ for the index of $G^+_{13}$ in $J^+_{ns}(13)(\Q)$.

\subsection{Chabauty Step}

Working more generally, we let $X$ be a smooth projective curve over $\Q$ of genus $g_{\scriptscriptstyle X} \geq 2$, with Jacobian $J(X)$ of rank $r_{\scriptscriptstyle X}$. We suppose that $X$ has an involution, $w$, defined over $\Q$, and define $C \coloneqq X / \langle w \rangle$. Write $\varrho:X \rightarrow C$ for the quotient map. A quadratic point on $X$ is \emph{exceptional} if it does not arise as a pullback of a rational point on $C$. We write $g_{\scriptscriptstyle C}$ for the genus of $C$, and $r_{\scriptscriptstyle C}$ for the rank of the Jacobian of $C$. Let $p$ be a prime of good reduction for both $X$ and $C$. The Chabauty step very much relies on the same pairing used in the usual Chabauty--Coleman method:
\begin{align*} \Omega_{X/\Q_p} \times J(X)(\Q_p)  & \longrightarrow \Q_p  \\ \Big( v, \Big[ \sum_i (P_i-Q_i) \Big] \Big) & \longmapsto \sum_i \int_{P_i}^{Q_i} v,
\end{align*} where $\Omega_{X / \Q_p}$ is the space of regular differentials on the curve $X$ viewed as a curve over $\Q_p$. This pairing is $\Q_p$-linear on the left, $\mathbb{Z}$-linear on the right, and its kernel on the right is $J(X)(\Q_p)_{\mathrm{tors}}$. The integrals are evaluated by expanding $\omega$ as a power series in a uniformiser. In the usual Chabauty--Coleman method, one looks for an \emph{annihilating differential}: a differential $v \in \Omega_{X / \Q_p}$ satisfying  \[ \langle v, D \rangle = 0 \quad \text{for all} \quad  D \in J(X)(\Q_p). \] For the symmetric version, we look for differentials that are both annihilating \emph{and} satisfy $\mathrm{Tr}(v)=0$, where $\mathrm{Tr}$ is the trace operator: $\mathrm{Tr}(v) \coloneqq  v+w^*v$. We write $V_0$ for the space of differentials that are both annihilating and have zero trace, and $\widetilde{V_0}$ for its reduction modulo $p$. 

\begin{theorem}[Symmetric Chabauty, Siksek]\label{chabcorol}  Let $Q = Q_a + Q_b \in \varrho^*(C(\Q)) \subseteq X^{(2)}(\Q)$ be a non-exceptional quadratic point. Let $p$ be a prime of good reduction for $X$ and $C$, and moreover suppose either that $p>3$, or that $p=3$ and  $\widetilde{Q_a} \neq \widetilde{Q_b} \mathrm{~~mod~} 3$. Let $t_{\widetilde{Q_a}}$ be a uniformiser at $\widetilde{Q_a}$. Let $v_1, \dots, v_k$ be a basis for $\widetilde{V_0}$. If, for some $i \in \{1, \dots, k \}$, \begin{equation}\label{Redchabcond} \frac{v_i}{\mathrm{d} t_{\widetilde{Q_a}}}~\Big|_{t_{\widetilde{Q_a}}=0} ~\neq 0, \end{equation} then $Q$ has no exceptional points in its mod $p$ residue disc.
\end{theorem}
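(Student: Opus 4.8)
The plan is to run the symmetric Chabauty argument: assume there is an exceptional quadratic point $R = R_a + R_b \in X^{(2)}(\Q)$ in the mod $p$ residue disc of the non-exceptional point $Q = Q_a + Q_b$, and derive a contradiction. After relabelling we may assume $\widetilde{R_a} = \widetilde{Q_a}$ and $\widetilde{R_b} = \widetilde{Q_b}$. Since $Q$ is non-exceptional we have $Q_b = w(Q_a)$, and $[R - Q] \in J(X)(\Q) \subseteq J(X)(\Q_p)$. An annihilating differential $v$ pairs trivially with every class in $J(X)(\Q_p)$, hence with $[R-Q]$, so
\[ \int_{Q_a}^{R_a} v + \int_{Q_b}^{R_b} v = 0 \qquad \text{for all } v \in V_0. \]
The goal is to turn this into a power-series identity in the local parameters of $R_a$ and $R_b$ and read off a contradiction modulo $p$.

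First I would choose uniformisers adapted to $w$. If $\widetilde{Q_a} \neq \widetilde{Q_b}$, take a uniformiser $t$ at $\widetilde{Q_a}$ and the uniformiser $s = w^* t$ at $\widetilde{Q_b}$; if $\widetilde{Q_a} = \widetilde{Q_b}$ — necessarily a $w$-fixed point of the special fibre — take a uniformiser $t$ there with $w^* t = -t$, which exists because $p \neq 2$. Expanding $v = \phi(t)\,\mathrm{d}t$ near $\widetilde{Q_a}$, the zero-trace condition $w^* v = -v$ forces $v = -\phi(s)\,\mathrm{d}s$ near $\widetilde{Q_b}$ in the first case, and forces $\phi$ to be an even power series in the second. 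Because $Q_b = w(Q_a)$, the parameters of $Q_a$ and $Q_b$ are equal (resp. opposite), so on integrating term by term the contributions of $Q_a$ and $Q_b$ cancel, leaving a relation of the form $\sum_n \tfrac{c_n}{n+1}(\tau_a^{n+1} - \tau_b^{n+1}) = 0$ (resp. $\sum_n \tfrac{c_{2n}}{2n+1}(r_a^{2n+1} + r_b^{2n+1}) = 0$), where $\tau_\bullet$, $r_\bullet$ are the parameters of $R_\bullet$ — elements of the maximal ideal, since $R$ lies in the residue disc — and the $c_n$ are the (integral) Taylor coefficients of $v$, with $c_0$ the value of $v/\mathrm{d}t$ at $\widetilde{Q_a}$.

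Next I would factor out $\tau_a - \tau_b$ (resp. $r_a + r_b$). The key observation is that $R$ is non-exceptional exactly when $R_b = w(R_a)$, which in these coordinates means $\tau_a = \tau_b$ (resp. $r_b = -r_a$); so, $R$ being exceptional, this factor is non-zero and may be cancelled, leaving $c_0 + \sum_{n \geq 1}(\cdots) = 0$ for every $v \in V_0$, where each term of the tail is a rational multiple of a monomial in the two elementary symmetric functions $\sigma_1, \sigma_2$ of $\tau_a, \tau_b$ (resp. $r_a, r_b$). One then checks that $\sigma_1$ and $\sigma_2$ both have positive $p$-adic valuation — including when $R_a, R_b$ are conjugate over a ramified quadratic extension of $\Q_p$ — so that reducing modulo $p$ kills the whole tail and yields $c_0 \equiv 0 \pmod p$. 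Since this holds for every $v \in V_0$, every differential in $\widetilde{V_0}$ vanishes at $\widetilde{Q_a}$, contradicting the hypothesis that $v_i/\mathrm{d}t_{\widetilde{Q_a}}$ is non-zero at $\widetilde{Q_a}$ for some $i$. Hence no exceptional $R$ lies in the residue disc of $Q$.

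The step I expect to be the main obstacle is the reduction modulo $p$ of the tail $\sum_{n \geq 1}(\cdots)$: one must bound the denominators $n+1$ (resp. $2n+1$) against the $p$-adic valuations of the monomials in $\sigma_1, \sigma_2$, and this is precisely where the hypotheses on $p$ enter — $p = 2$ already fails at $n = 1$, and the coincident-reduction case $\widetilde{Q_a} = \widetilde{Q_b}$, where the symmetric functions may have smaller valuation, is the more delicate one and accounts for the extra condition imposed when $p = 3$. A secondary technical point is the justification of the term-by-term integration and the integrality of the coefficients $c_n$, which relies on the good reduction of $X$ at $p$ (so that $t$ extends to a uniformiser on the $\Z_p$-model) and on the comparison between Coleman integrals and formal antiderivatives; together with the (easy) verification that $[R-Q]$ really lies in $J(X)(\Q)$ so that the annihilation property applies to it.
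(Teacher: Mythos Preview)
Your sketch unpacks the argument that the paper largely cites from \citep{Symmchab} and \citep{joshaquad}, so the route is the same: pair $[R-Q]$ with a zero-trace annihilating differential, expand in $w$-compatible uniformisers, factor out $\tau_a-\tau_b$ (resp.\ $r_a+r_b$), and reduce modulo $p$. You correctly locate the crux at the denominator estimate and correctly note that $p=2$ already fails at $n=1$.

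There is, however, a gap in your handling of the $p=3$ refinement. You assert that the tail vanishes mod $p$ ``including when $R_a,R_b$ are conjugate over a ramified quadratic extension of $\Q_p$'', and ascribe the extra hypothesis at $p=3$ solely to avoiding the coincident-reduction case. But for $p=3$ the ramified estimate already fails in the \emph{non}-coincident setting: at $n=2$ the tail term is $c_2(\sigma_1^2-\sigma_2)/3$, and although $\sigma_1,\sigma_2\in 3\Z_3$ as you say, the $\sigma_2$ contribution has valuation only $1$, so after dividing by $3$ it need not vanish modulo $3$. What the paper actually does --- and what your sketch is missing --- is to observe that the hypothesis $\widetilde{Q_a}\neq\widetilde{Q_b}$ forces $3$ to be \emph{unramified} in $L=\Q(Q_a)\cdot\Q(R_a)$: if $3$ ramified in either factor, inertia would be the full Galois group of that quadratic field and the two conjugate points would have the same reduction, contradicting $\widetilde{Q_a}\neq\widetilde{Q_b}$ (directly for $Q$, and via $\widetilde{R_\bullet}=\widetilde{Q_\bullet}$ for $R$). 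With $3$ unramified one has $\tau_a,\tau_b$ of integral valuation at least $1$, the degree-$n$ tail term then has valuation at least $n-\ord_3(n+1)\geq 1$ for all $n\geq 1$, and the argument concludes. So the role of the $p=3$ hypothesis is not that the coincident case has ``smaller valuation'', but that it is the ramification-excluding mechanism for both $Q$ and $R$.
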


We note that $\frac{v_i}{\mathrm{d} t_{\widetilde{Q_a}}}~\Big|_{t_{\widetilde{Q_a}}=0}$ is simply the constant term in the expansion of $v_i$ as a power series in $t_{\widetilde{Q_a}}$. Although not a necessary condition, the rank-genus condition $r_{\scriptscriptstyle X} - r_{\scriptscriptstyle C}   < g_{\scriptscriptstyle X} - g_{\scriptscriptstyle C}$ (which is analogous to the rank-genus condition in the Chabauty--Coleman method) guarantees the existence of an annihilating differential in the kernel of the trace map \citep[Lemma 4.2]{Symmchab}. 

\begin{proof}
For $p >3$ this is the same statement as in \citep[Theorem 2.4]{joshaquad}, which in turn is a variant of the result in \citep[Theorem 4.3]{Symmchab}, since using the notation of \citep[Theorem 4.3]{Symmchab} we have $e=1$ and $N' \leq 2$, so that $\mathrm{ord}_p(i+1) < i/N'$ for all $i > 0$ and $p >3$. The statement here for $p=3$ is slightly sharper, but follows the same ideas. 

Suppose $p=3$ and  $\widetilde{Q_a} \neq \widetilde{Q_b} \mathrm{~~mod~} 3$ as in the theorem. Suppose $P \coloneqq P_a+P_b$ is an exceptional point in the mod $3$ residue disc of $Q$,  and after rearranging if necessary, assume $\widetilde{P_a}=\widetilde{Q_a}$ and $\widetilde{P_b}=\widetilde{Q_b}$. Write  \[ K \coloneqq \Q(Q_a), \quad M \coloneqq \Q(P_a), \quad L \coloneqq \Q(Q_a,P_a) = K \cdot M.\] Then following the proof of \citep[Theorem 4.3]{Symmchab}, it is enough to show that $3$ does not ramify in $L$. Suppose for a contradiction that $3$ ramifies in $L$. Then $3$ must ramify in (at least) one of $K$ and $M$. Suppose $3$ ramifies in $K$, and write $\mathfrak{p}$ for the prime of $K$ above $3$. Then the inertia group of $\mathfrak{p}$ is the full Galois group $\mathrm{Gal}(K/\Q)$. So for $\sigma \in \mathrm{Gal}(K/\Q)$, we have that $Q_b = Q_a^\sigma = Q_a \pmod{\mathfrak{p}}$, a contradiction. If $3$ ramifies in $M$, then $\widetilde{Q_a}=\widetilde{P_a} = \widetilde{P_b}=\widetilde{Q_b} \mathrm{~~mod~} 3,$ another contradiction. 
\end{proof}

In order to compute $\widetilde{V_0}$, we use the following result, based on \citep[pp.~9-10]{joshaquad}.

\begin{proposition}\label{diffcalc}
Assume $r_{\scriptscriptstyle X}=r_{\scriptscriptstyle C}$. Let $p$ be a prime of good reduction for both $X$ and $C$. Denote reduction mod $p$ by $\sim$. Then $ \widetilde{\Omega_{\scriptscriptstyle X}} = \Omega_{\scriptscriptstyle \widetilde{X}}$, and 
\[\widetilde{V_0} = (1-\widetilde{w}^*)(\Omega_{\scriptscriptstyle \widetilde{X}}).\]
\end{proposition}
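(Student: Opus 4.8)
The plan is to prove the two assertions in turn: the first is a routine good‑reduction comparison, and the second follows once we identify $V_0$ \emph{exactly} as an eigenspace of $w^*$, which is where the hypothesis $r_{\scriptscriptstyle X}=r_{\scriptscriptstyle C}$ does all the work.

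\emph{Integral structure and the first assertion.} I would fix $\mathcal{X}$, the smooth proper model of $X$ over $\Z_p$, which exists since $p$ is a prime of good reduction and is unique; and put $\mathcal{W}\coloneqq H^0(\mathcal{X},\Omega_{\mathcal{X}/\Z_p})$, which is the $\Z_p$-lattice inside $\Omega_{X/\Q_p}$ with respect to which reductions are taken, so that $\widetilde{\Omega_{\scriptscriptstyle X}}$ means $\mathcal{W}/p\mathcal{W}$. Since $\mathcal{X}\to\operatorname{Spec}\Z_p$ is smooth and proper with geometrically connected fibres of constant genus $g_{\scriptscriptstyle X}$, standard cohomology‑and‑base‑change arguments give that $\mathcal{W}$ is free of rank $g_{\scriptscriptstyle X}$ and that the reduction map $\mathcal{W}/p\mathcal{W}\to H^0(\widetilde{X},\Omega_{\widetilde{X}})=\Omega_{\scriptscriptstyle\widetilde{X}}$ is an isomorphism; this is the first assertion. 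As $w$ is defined over $\Q$ it extends to an automorphism of $\mathcal{X}$ (again by uniqueness of the smooth model), so $w^*$ preserves $\mathcal{W}$, and because $p$ is odd the idempotents $\tfrac12(1\pm w^*)$ split $\mathcal{W}=\mathcal{W}^{+}\oplus\mathcal{W}^{-}$ into $w^*$-eigenlattices, compatibly with reduction. Hence $\widetilde{\mathcal{W}^{-}}$ is the $(-1)$-eigenspace of $\widetilde{w}^*$ on $\Omega_{\scriptscriptstyle\widetilde{X}}$, which since $\widetilde{w}^*$ is an involution and $p$ is odd equals $(1-\widetilde{w}^*)(\Omega_{\scriptscriptstyle\widetilde{X}})$.

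\emph{Identifying $V_0$.} The core of the argument is the claim that, under the hypothesis $r_{\scriptscriptstyle X}=r_{\scriptscriptstyle C}$,
\[ V_0 \;=\; \mathcal{W}^{-}\otimes_{\Z_p}\Q_p \;=\; \{\,v\in\Omega_{X/\Q_p}\,:\,w^*v=-v\,\}. \]
The inclusion $V_0\subseteq\{v:w^*v=-v\}$ is immediate from $\mathrm{Tr}(v)=v+w^*v$. For the reverse inclusion I would show that every $v$ with $w^*v=-v$ is automatically annihilating. Writing $\varrho^*,\varrho_*$ for the maps on Jacobians induced by $\varrho\colon X\to C$, one has $\varrho^*\varrho_*=1+w^*$ on $J(X)$, while $\varrho^*$ is an isogeny onto the abelian subvariety $J^{+}\coloneqq\varrho^*J(C)\sim J(C)$, on which $w^*$ acts as the identity; hence $\mathrm{rk}(J^{+})=r_{\scriptscriptstyle C}$, and the complementary isogeny factor of $J(X)$ (on which $w^*$ acts as $-1$) has rank $r_{\scriptscriptstyle X}-r_{\scriptscriptstyle C}=0$. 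Therefore $1+w^*$ is surjective on $J(X)(\Q)\otimes\Q$, so $(1+w^*)J(X)(\Q)$ has finite index in $J(X)(\Q)$, and any rational point $D$ of $J(X)$ satisfies $nD=(1+w^*)D'$ for some integer $n\geq 1$ and some $D'\in J(X)(\Q)$. Using functoriality of the Chabauty--Coleman pairing under $w$, together with $w^*=w_*$ on $J(X)$ (as $w$ is an involution),
\[ n\,\langle v,D\rangle \;=\; \langle v,(1+w^*)D'\rangle \;=\; \langle v,D'\rangle+\langle w^*v,D'\rangle \;=\; \langle v,D'\rangle-\langle v,D'\rangle \;=\; 0, \]
so $\langle v,D\rangle=0$; the same then holds on the closure of $J(X)(\Q)$ in $J(X)(\Q_p)$, hence on any finite‑rank subgroup used to define $V_0$. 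This proves the claim, and in particular $V_0\cap\mathcal{W}=\mathcal{W}^{-}$.

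\emph{Conclusion.} Reducing modulo $p$ now finishes the proof: $\widetilde{V_0}=\widetilde{\mathcal{W}^{-}}$, which by the first part is the $(-1)$-eigenspace of $\widetilde{w}^*$ on $\Omega_{\scriptscriptstyle\widetilde{X}}$, namely $(1-\widetilde{w}^*)(\Omega_{\scriptscriptstyle\widetilde{X}})$. I do not expect a genuine obstacle here: the only real content is the observation that $r_{\scriptscriptstyle X}=r_{\scriptscriptstyle C}$ forces $1+w^*$ to be surjective on $J(X)(\Q)\otimes\Q$, which makes the annihilation condition in the definition of $V_0$ vacuous; the remainder is careful bookkeeping with the integral structures, the one point to watch being that $p$ must be odd so that the $w^*$-eigenspace decomposition is defined over $\Z_p$ and commutes with reduction. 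This follows the treatment in \citep{joshaquad}.
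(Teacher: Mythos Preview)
Your proof is correct and follows essentially the same approach as the paper: both identify $V_0$ with the $(-1)$-eigenspace $(1-w^*)(\Omega_{X/\Q_p})=\ker(1+w^*)$, use $r_X=r_C$ to show the annihilation condition is automatic on that eigenspace, and then reduce modulo $p$. The only cosmetic difference is that the paper writes $ND=\varrho^*\Delta$ (using that $\varrho^*J(C)(\Q)$ has finite index) and appeals to the trace formula for Coleman integrals, whereas you write $nD=(1+w^*)D'$ and move $1+w^*$ across the pairing; since $\varrho^*\varrho_*=1+w^*$, these are the same manoeuvre.
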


\begin{proof}
We first show that  $V_0 = (1-w^*)(\Omega_{\scriptscriptstyle X})$. Since $w$ is an involution, $(1-w^*)(\Omega_{\scriptscriptstyle X}) = \ker(1+w^*)$. So $v \in (1-w^*)(\Omega_{\scriptscriptstyle X})$ if and only if $\mathrm{Tr}(v)=0$. So $V_0 \subseteq (1-w^*)(\Omega_{\scriptscriptstyle X})$.

Next, as $r_{\scriptscriptstyle X}=r_{\scriptscriptstyle C}$, we have that $\varrho^*(J(C)(\Q))$ is a full rank subgroup of $J(X)(\Q)$. Write $N$ for its index. Let $D \in J(X)(\Q)$. Then $ND \in \varrho^*(J(C)(\Q)$ and we can write $ND = \varrho^*\Delta$ for some $\Delta \in J(C)(\Q)$. Then for $v \in (1-w^*)(\Omega_{\scriptscriptstyle X})$, using the properties of Coleman integration, \begin{equation*}
\int_0^D v = \frac{1}{N} \int_0^{\varrho^*\Delta}  v = \frac{1}{N} \int_0^D \mathrm{Tr}(v) = 0.
\end{equation*} So $v$ annihilates $J(X)(\Q)$. So $ (1-w^*)(\Omega_{\scriptscriptstyle X}) \subseteq V_0$.

We then effectively reduce everything modulo $p$, as described in \citep[pp.~9-10]{joshaquad}. \end{proof}

\subsection{Sieving Step}

Let $X$ and $C$ be as in Section 5.2. We present here an adaptation of the sieve used in \citep[pp.~226-228]{Symmchab} and apply it in the case $X=X_{ns}(13)$ in Section 5.4. For a more general introduction to this theory, and in particular its application to hyperelliptic curves, we refer to \citep{exposieve}. We start with a non-empty set $\mathcal{L} \subseteq X^{(2)}(\Q)$ of known non-exceptional quadratic points, and a hypothetical unknown exceptional quadratic point $P \in X^{(2)}(\Q)$. In order to apply the sieve we need the following data:

\begin{itemize} 
\item A finite index subgroup $G \subseteq J(\Q)$ with generators $D_1, \dots, D_n$. We write $I \coloneqq [J(\Q):G]$.

\item A non-negative integer parameter, $M$.

\item Primes $p_1, \dots, p_k$ of good reduction for $X$, so that for each $i \in \{1, \dots, k\}$ the index $I$ is coprime to $\#(J(\mathbb{F}_{p_i})/ M J(\mathbb{F}_{p_i}))$.
\end{itemize}

If the index $I$ is known, or even if we know some integer $\hat{I}$ satisfying $\hat{I} \cdot J(\Q) \subseteq G$, then we can adapt the sieve and remove the coprimality assumption. This version of the sieve is described in \citep[pp.~6-7]{joshaquad}. However, usually, and in particular for $X_{ns}(13)$, we will not know the index, or such an integer $\hat{I}$. In this case we use the $p$-saturation method described in Section 5.5 to determine primes that do not divide the index. 

The coprimality condition allows us to  choose an integer, $I^*$, so that for each $i \in \{1, \dots, k\}$, \[I^*I \equiv 1 \mathrm{~~~mod~~} \#\big(J(\mathbb{F}_{p_i})/ M J(\mathbb{F}_{p_i})\big).\] We write $\mu_{p_i,M}:J(\mathbb{F}_{p_i}) \rightarrow J(\mathbb{F}_{p_i})/ M J(\mathbb{F}_{p_i})$ for the natural quotient map. A discussion of the heuristics behind the choice of primes and the parameter $M$ can be found in \citep{exposieve}. 

For the moment, we fix a prime $p \coloneqq p_i$ for some $i$. Choose a base point, which as before we will denote $\infty \in X^{2}(\Q)$, and write $\iota:  X^{2}(\Q) \hookrightarrow J(\Q)$ for the corresponding Abel--Jacobi map. Define \begin{align*} \varphi:   \mathbb{Z}^n & \longrightarrow G \subseteq J(\Q) \\  (a_1, \dots, a_n)& \longmapsto a_1D_1+ \dots  + a_nD_n. \end{align*} We write $\varphi_{p,M}$ for the map obtained by first applying $\varphi$, then reducing modulo $p$, and then applying $\mu_{p,M}$.  Denote reduction modulo $p$ by $\sim$. The map $\iota_p$ is the Abel--Jacobi map on $X^{(2)}(\mathbb{F}_p)$ with basepoint $\widetilde{\infty}$. Finally $\iota_{p,M}$ is the composition of $\iota_p$ with the quotient map $\mu_{p,M}$. We obtain the following commutative diagram.
%\large
\begin{equation}\label{SieveDiagram}  \begin{tikzcd}[sep = huge] 
 \mathcal{L}  \arrow[hook]{r} & [-25pt] X^{(2)}(\Q) \arrow[hook]{r}{\iota}  \arrow[swap]{d}{\sim} & J(\Q)  \arrow{d}{\sim} & [-30pt] G \arrow[hook']{l} & \mathbb{Z}^n \arrow[two heads,swap]{l}{\varphi} \arrow[two heads]{ddll}{\varphi_{p,M}} 
\\ 
& X^{(2)}(\mathbb{F}_p) \arrow[hook]{r}{\iota_p} \arrow[swap]{dr}{\iota_{p,M}}  &  J(\mathbb{F}_p) \arrow[two heads]{d}{\mu_{p,M}}
\\ & &  \frac{J(\mathbb{F}_p)}{ M J(\mathbb{F}_p)}     
\end{tikzcd} \end{equation} \normalsize

Consider $P \in X^{(2)}(\Q) \backslash \mathcal{L}$, our (hypothetical) unknown quadratic point. We see that $I \cdot \iota (P) \in G$, so we can write \[I \cdot \iota (P)=a_1D_1 + \dots + a_nD_n = \varphi(a_1, \dots, a_n),\] for some $(a_1, \dots, a_n) \in \Z^n$. Multiplying through by $I^*$, we have \[(I^*I) \cdot  \iota (P)=I^*a_1D_1 + \dots + I^*a_nD_n = \varphi(I^*a_1, \dots, I^*a_n).\]
Reducing this expression modulo $p$ and applying the quotient map $\mu_{p,M}$ gives, in the group $J(\mathbb{F}_{p_i})/ M J(\mathbb{F}_{p_i})$,
\[   \mu_{p,M} \big((I^*I) \cdot \widetilde{\iota(P)} \big) = I^*a_1\mu_{p,M}(\widetilde{D_1}) + \dots + I^*a_n\mu_{p,M}(\widetilde{D_n}).\] Since Diagram \ref{SieveDiagram} commutes,  $\mu_{p,M} \big((I^*I) \cdot \widetilde{\iota(P)} \big)= (I^*I) \cdot \iota_{p,M}(\widetilde{P})$. Also, by our choice of $I^*$, we have  $(I^*I) \cdot \iota_{p,M}(\widetilde{P})=\iota_{p,M}(\widetilde{P})$ in $J(\mathbb{F}_{p_i})/ M J(\mathbb{F}_{p_i})$. So \[\iota_{p,M}(\widetilde{P})= I^*a_1\mu_{p,M}(\widetilde{D_1}) + \dots + I^*a_n\mu_{p,M}(\widetilde{D_n})=\varphi_{p,M}(I^*a_1, \dots, I^*a_n). \] We conclude that $\iota_{p,M}(\widetilde{P}) \in \varphi_{p,M}(\Z^n)=\mu_{p,M}(\widetilde{G})$. 

From this, we obtain a set of points in $X^{(2)}(\mathbb{F}_p)$ that P could apriori reduce to modulo $p$, namely \[\mathcal{S}_{p,M} \coloneqq \{ D \in X^{(2)}(\mathbb{F}_p) : \iota_{p,M}(D) \in \mu_{p,M}( \widetilde{G}) \}. \] 
We now use the information obtained from the Chabauty step to eliminate as many of the points in $\mathcal{S}_{p,M}$ as possible. Write $\mathcal{H}_{p,M} \subseteq X^{(2)}(\mathbb{F}_p)$ for the reductions of points that satisfy the test in Theorem \ref{chabcorol}. Then $\widetilde{P} \neq \widetilde{Q}$ for any $\widetilde{Q} \in \mathcal{H}_{p,M}$ as otherwise $P$ would be an exceptional point in the mod $p$ residue disc of $Q$, contradicting $Q \in \mathcal{H}_{p,M}$. This leaves us with a set $ \mathcal{T}_{p,M} \coloneqq \mathcal{S}_{p,M} \backslash \mathcal{H}_{p,M} \subseteq X^{(2)}(\mathbb{F}_p)$ of possibilities for $\widetilde{P}$. The more points that pass the Chabauty test the better, as this means we are eliminating more possibilities for $ \widetilde{P}$. We can calculate $\mathcal{T}_{p,M}$ explicitly.

Since $ \iota_{p,M}(\mathcal{T}_{p,M}) \subseteq \mu_{p,M}(\widetilde{G}) = \varphi(\mathbb{Z}^m)$, we obtain a set, $\mathcal{W}_{p,M}$, of $\mathcal{B}_{p,M} \coloneqq \ker(\varphi_{p,M})$ cosets: \[\mathcal{W}_{p,M} \coloneqq \varphi_{p,M}^{-1}( \iota_{p,M}(\mathcal{T}_{p,M})). \] 

This set of cosets, $\mathcal{W}_{p,M}$, encodes the list of possibilities for $ I \cdot \iota(P)$; namely if $I \cdot \iota(P) = a_1D_1+ \dots + a_nD_n$, then $(I^*a_1, \dots, I^*a_n) \in u+\mathcal{B}_{p,M}$ for some $u+\mathcal{B}_{p,M} \in \mathcal{W}_{p,M}$.

Although $\mathcal{W}_{p,M}$ is obtained by investigating matters modulo $p$, the information it encodes is completely independent of $p$. This means that if we choose a different prime of good reduction, say $p'$, then just as above, if $I \cdot \iota(P) = a_1D_1+ \dots+ a_nD_n$, then $(I^*a_1, \dots, I^*a_n) \in v+\mathcal{B}_{p',M}$ for some $v+\mathcal{B}_{p',M} \in \mathcal{W}_{p',M}$. So $(I^*a_1, \dots, I^*a_n) \in \mathcal{W}_{p,p',M} \coloneqq \mathcal{W}_{p,M} \cap \mathcal{W}_{p',M}$.

We then repeat this process for each prime $p$ in our list $p_1, \dots,p_k$ of primes of good reduction. We hope that $\mathcal{W}_{p_1, \dots, p_k,M} \coloneqq \cap_{i=1}^k \mathcal{W}_{p_i,M} = \emptyset$, as if this is the case, it follows that $I \cdot \iota(P)$ is not expressible as a linear combination of the $D_i$, and thus $I \cdot \iota(P) \notin G$, meaning that $P \notin X^{2}(\Q)$, giving us our desired contradiction. We state this sieving principle as a proposition.

\begin{proposition}[Sieving Principle]
Let $p_1, \dots, p_k$ be primes of good reduction for $X$, and let $M$ be a non-negative integer. Suppose \[\mathcal{W}_{p_1,\dots,p_k,M}  = \emptyset.\] Then $X^{(2)}(\Q)$ contains no exceptional quadratic points.

\end{proposition}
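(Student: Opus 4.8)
The statement to prove is the Sieving Principle: if $\mathcal{W}_{p_1,\dots,p_k,M} = \emptyset$, then $X^{(2)}(\Q)$ contains no exceptional quadratic points.

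The plan is to argue by contradiction, unwinding the chain of constructions laid out in the sieving step. Suppose $P \in X^{(2)}(\Q)$ is an exceptional quadratic point. First I would observe that $I \cdot \iota(P) \in G$, since $I = [J(\Q):G]$ and $\iota(P) \in J(\Q)$. Hence we can write $I \cdot \iota(P) = a_1 D_1 + \dots + a_n D_n = \varphi(a_1,\dots,a_n)$ for some $(a_1,\dots,a_n) \in \Z^n$, and multiplying through by $I^*$ gives $(I^*a_1,\dots,I^*a_n) \in \Z^n$ with $\varphi(I^*a_1,\dots,I^*a_n) = (I^*I)\cdot\iota(P)$.

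Next, fix any prime $p = p_i$ from our list. The commutativity of Diagram~\ref{SieveDiagram} together with the choice of $I^*$ (so that $I^*I \equiv 1 \bmod \#(J(\mathbb{F}_{p})/MJ(\mathbb{F}_{p}))$) shows that $\iota_{p,M}(\widetilde P) = \varphi_{p,M}(I^*a_1,\dots,I^*a_n)$, so in particular $\iota_{p,M}(\widetilde P) \in \mu_{p,M}(\widetilde G)$ and $\widetilde P \in \mathcal{S}_{p,M}$. I would then invoke Theorem~\ref{chabcorol}: if $\widetilde P$ reduced to a point of $\mathcal{H}_{p,M}$, say $\widetilde P = \widetilde Q$ with $Q$ non-exceptional passing the Chabauty test, then $P$ would be an exceptional point in the mod $p$ residue disc of $Q$, contradicting the conclusion of Theorem~\ref{chabcorol}. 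Hence $\widetilde P \in \mathcal{T}_{p,M} = \mathcal{S}_{p,M}\setminus\mathcal{H}_{p,M}$, and therefore $(I^*a_1,\dots,I^*a_n) \in \varphi_{p,M}^{-1}(\iota_{p,M}(\widetilde P)) \subseteq \varphi_{p,M}^{-1}(\iota_{p,M}(\mathcal{T}_{p,M})) = \mathcal{W}_{p,M}$.

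Finally, the crucial point — and the place where the argument really has content rather than bookkeeping — is that the integer vector $(I^*a_1,\dots,I^*a_n)$ is a \emph{single, $p$-independent} object attached to $P$, whereas the containment $(I^*a_1,\dots,I^*a_n)\in\mathcal{W}_{p,M}$ holds for every $p$ in the list. Running the above for each $p_i$ shows $(I^*a_1,\dots,I^*a_n) \in \bigcap_{i=1}^k \mathcal{W}_{p_i,M} = \mathcal{W}_{p_1,\dots,p_k,M}$, which is empty by hypothesis — a contradiction. Therefore no such $P$ exists. I expect the only subtlety to be making explicit that $\mathcal{W}_{p,M}$ is genuinely a set of cosets of $\mathcal{B}_{p,M} = \ker(\varphi_{p,M})$ in $\Z^n$ rather than merely a subset of $J(\mathbb{F}_p)/MJ(\mathbb{F}_p)$, so that the intersection across different primes takes place in the fixed lattice $\Z^n$; this is exactly what licenses combining information from distinct primes, and it is the heart of the sieve.
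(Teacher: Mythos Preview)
Your proposal is correct and follows exactly the same approach as the paper: the paper does not give a separate proof of the proposition, but the argument is precisely the contradiction chain laid out in the text immediately preceding it, which you have recapitulated faithfully (including the key observation that $(I^*a_1,\dots,I^*a_n)$ is a single $p$-independent element of $\Z^n$, so the intersection across primes is meaningful).
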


We note that the integer $M$ plays two roles. The first is crucial. We require $I$ to be coprime to  $\#(J(\mathbb{F}_{p_i})/ M J(\mathbb{F}_{p_i}))$ for each $i$. If there is a prime $p \mid \#J(\mathbb{F}_{p_i})$ which is either not coprime to the index, or we cannot show  it is coprime to the index, then by not including this prime as a factor of $M$, we have that $p \nmid \#(J(\mathbb{F}_{p_i})/ M J(\mathbb{F}_{p_i}))$. This allows us to remove any troublesome primes. The downside to this is that we potentially lose information in the sieve.  We also note that if the index, or an integer divisible by the index, is not known then we cannot set $M = 1$.

 Secondly, $M$ allows us to remove (in the quotient) any large primes appearing in the factorisation of $\#J(\mathbb{F}_{p_i})$. This can help avoid a combinatorial explosion due to the Chinese Remainder Theorem. Indeed, as we intersect the subgroups $\mathcal{B}_{p,M}$, the resulting subgroup can have very large index in $\Z^n$, which slows down calculations.

\subsection{Applying the Sieve }

We now exhibit some of the sieving step calculations. We set $X$ to be the curve $X_{ns}(13)$, $\mathcal{L} \coloneqq \{ Q_1, \dots, Q_7 \}$, and $G \coloneqq G_{13}$.

We first make some remarks on the Chabauty step. The curves $X_{ns}(13)$ and $X_{ns}^+(13)$ have equal rank, and so we can apply Proposition \ref{diffcalc} to compute $\widetilde{V_0}$. For each known quadratic point $Q_i$, we verify that $\widetilde{Q_{i,a}} \neq \widetilde{Q_{i,b}} \mathrm{~~mod~} 3$, meaning that the conditions of Theorem \ref{chabcorol} hold for all primes $p \geq 3$ other than $13$ (which is a prime of bad reduction). We found that each point $Q_i$ is alone in its mod $p$ residue disc for all primes $3 \leq p \leq 73$ (other than $13$). This is the best possible result we could hope for, as it greatly increases chances of success in the sieving step.

As we will see in Section 5.5, the integers $3,5,13,$ and $29$ are coprime to $I=[J_{ns}(13)(\Q) : G_{13}]$. This means that we can choose to set $M=3^{10} \cdot 5^{10}\cdot 13^{10} \cdot 29^{10} $ (here, the exponent $10$ is simply large enough to ensure we keep the corresponding factors in the quotient).

We start by applying the sieve with $p=3$. Our aim is to calculate $\mathcal{W}_{3,M}$. To do this, we must first calculate $\mathcal{T}_{3,M}$. We find that $X_{ns}(13)^{(2)}(\mathbb{F}_3)$ consists of $27$ points. By considering $\iota_{3,M}(R)$ for each $R \in X_{ns}(13)^{(2)}(\mathbb{F}_3)$, and seeing which of these lie in $\mu_{3,M}(\widetilde{G_{13}})$, we obtain the set $\mathcal{S}_{3,M}$ which consists of nine points. As each of our seven points satisfies the Chabauty criterion, \[\mathcal{H}_{3,M}= \big\lbrace\widetilde{Q_i} : i = 1, \dots, 7 \big\rbrace.\] Since $\widetilde{Q_2} = \widetilde{Q_6}$, $\# \mathcal{H}_{3,M}=6$, and removing these six points from $\mathcal{S}_{3,M}$ gives \begin{align*}
\mathcal{T}_{3,M} = \big\lbrace & (0 : 0 : 0 : t^5 : t^5 : 2 : t^2 : 1)  + (0 : 0 : 0 : t^3 : t^3 : 2 : t^6 : 1), \\
   &   (0 : 0 : 0 : t^3 : t^7 : t^7 : t^5 : 1)  + (0 : 0 : 0 : t^5 : t : t : t3 : 1), \\ 
    &  (0 : 0 : 0 : t^5 : t^6 : t^5 : 1 : 0)  + (0 : 0 : 0 : t^3 : t^2 : t^3 : 1 : 0) \big\rbrace, 
\end{align*}
where $t$ satisfies $t^2-t-1 = 0$. Applying $\iota_{3,M}$ to this set, we obtain \[\iota_{3,M}(\mathcal{T}_{3,M})= \big\lbrace (8,8),(7,7),(9,9) \big\rbrace \in \frac{J_{ns}(13)(\mathbb{F}_3)}{MJ_{ns}(13)(\mathbb{F}_3)} \cong \Z/13\Z \oplus \Z / 13\Z .\] Writing $\mathbb{Z}^3 \coloneqq \langle e_1,e_2,e_3 \rangle$ with $e_i$ the standard basis vectors, we find that \begin{align*} \mathcal{B}_{3,M} & =  \ker(\varphi_{3,M}) = \langle e_1+7e_3, e_2 + 5e_3, 13e_3 \rangle, \\ \mathcal{W}_{3,M} & = \{-3e_1+\mathcal{B}_{3,M}, 4e_1+\mathcal{B}_{3,M}, -2e_1+\mathcal{B}_{3,M} \}.\end{align*} The index of $B_{3,M}$ in $\Z^3$ is $13$.  This completes the calculations for $p =3$.

We now repeat these calculations for $p=5$. This provides us with a group $\mathcal{B}_{5,M}$ and a set of $\mathcal{B}_{5,M}$ cosets, $\mathcal{W}_{5,M}$. We find that  \begin{align*} \mathcal{B}_{5,M} & \coloneqq \langle e_1+6e_3, e_2 + 320e_3, 377e_3 \rangle, \\ \mathcal{W}_{5,M} & \coloneqq \{-57e_1+\mathcal{B}_{5,M}, -165e_1+\mathcal{B}_{5,M}, 28e_1+\mathcal{B}_{5,M} \}. \end{align*}
Although $\mathcal{W}_{3,M}$ and $\mathcal{W}_{5,M}$ were obtained from information modulo the corresponding prime, the information encoded is independent of the prime used. We now intersect $\mathcal{W}_{3,M}$ and $\mathcal{W}_{5,M}$ to find a new list of cosets, which will now be $\mathcal{B}_{3,5,M} \coloneqq \mathcal{B}_{3,M}\cap \mathcal{B}_{5,M}$ cosets. We obtain 
\begin{align*}  \mathcal{B}_{3,5,M} & \coloneqq  \langle e_1+9e_2+247e_3, 13e_2+13e_3, 377e_3 \rangle, \\
 \mathcal{W}_{3,5,M} & \coloneqq \{-165e_1 + \mathcal{B}_{3,5,M}, ~ 274e_1 -5e_3 + \mathcal{B}_{3,5,M}, ~ -371e_1 + e_3 + \mathcal{B}_{3,5,M}    , \\ &    \qquad 445e_1 - 4e_3 + \mathcal{B}_{3,5,M}, ~   -1187e_1 +6e_3 + \mathcal{B}_{3,5,M}, \\ & \qquad 29e_1 + 6e_3 + \mathcal{B}_{3,5,M}, ~ -475e_1-2e_3+\mathcal{B}_{3,5,M},  \\ & \qquad  91e_1 + e_3 + \mathcal{B}_{3,5,M},~ -981e_1 + 5e_3 + \mathcal{B}_{3,5,M} \}. \end{align*}
Our list of cosets has grown, and so it may seem like we are worse off than we were before. However, these cosets are cosets of a much smaller subgroup: the index of $\mathcal{B}_{3,5,M}$ in $\Z^3$ is $4901$. By continuing in this manner, we eventually find that \[\mathcal{W}_{3,5,31,43,53,61,73, M} = \emptyset,\] proving that there are no exceptional quadratic points on $X_{ns}(13)$. Table 3 shows how the size of $\mathcal{W}_{p_1, \dots, p_k,M}$ and the index of $\mathcal{B}_{p_1, \dots, p_k,M }$ in $\mathbb{Z}^3$ varies as we increase the number of primes.
 
\begingroup
\footnotesize
\renewcommand*{\arraystretch}{1.5}
\begin{table}[ht!]\label{Tab3}
\begin{center}
\begin{tabular}{ |c|c|c|c|c|c|c|c| } 
\hline
$p_1, \dots, p_k$ & $3$ & $3,5$ & $3,5,31$ & $3,5,31,43$ & $3,5,31,43,53$  & $3,5,31,43,53,61$  & $3,5,31,43,53,61,73$  \\ \hline 
$ [ \mathbb{Z}^3 : \mathcal{B}_{p_1, \dots, p_k,M} ] $ & $13$ & $4901$ & $142129$ & $1847677$ & $1847677$ & $1847677$ & $1847677$ \\ \hline
$ \# \mathcal{W}_{p_1, \dots, p_k,M}$ & $3$ & $9$ & $27$ & $351$ & $247$ & $65$ & $0$ \\ \hline
\end{tabular}
\end{center}
\caption{Sieving}
\vspace{-6mm}
\end{table} \endgroup  

For our choice of $M$, we first chose all primes $p \leq 73$ for which $J(\mathbb{F}_p) / M J(\mathbb{F}_p) \neq 0$ and  $ [\mathbb{Z}^3 : \mathcal{B}_{p,M} ] >1$. We then applied the sieve, and removed any primes which did not alter the size of $\mathcal{W}_{p_1, \dots, p_k,M}$, leaving the primes $3,5,31,43,53,61$, and $73$. We note that a different choice of $M$, and a different choice of primes, or using the same primes in a different order, may reduce the computation time. Due to computational restraints, it is impractical to use primes $>100$ in the sieving process.

\subsection{Testing Saturation}

It remains to show that the index, $I$, of $G_{13}$ in $J_{ns}(13)(\Q)$, is not divisible by the primes $3,5,13,$ and $29$. The following result allows us to work with the subgroup $G_{13}^+$ of $J_{ns}^+(13)(\Q)$. This step is important to avoid the computational issues of working directly with $J_{ns}(13)$.

\begin{proposition}\label{index} Writing $I=[J_{ns}(13)(\Q) : G_{13}]$ and $I^+=[J^+_{ns}(13)(\Q) : G^+_{13}]$ as before, we have $I \mid 14I^+$. In particular, if $l$ is a prime with $l \nmid I^+$ and $l \neq 2,7$, then $l \nmid I$.
\end{proposition}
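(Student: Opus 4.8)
The plan is to factor the index $I = [J_{ns}(13)(\Q):G_{13}]$ through the subgroup $\varrho^*\big(J^+_{ns}(13)(\Q)\big)$, using the standard identities for the degree-$2$ quotient $\varrho$. Write $J \coloneqq J_{ns}(13)$, $J^+ \coloneqq J^+_{ns}(13)$, and $w \coloneqq w_{13}$. One has $\varrho_*\circ\varrho^* = [2]$ on $J^+$, $\varrho^*\circ\varrho_* = 1+w^*$ on $J$, and $w^*\circ\varrho^* = \varrho^*$ (the last because $\varrho\circ w = \varrho$). By construction $D_i = \varrho^*(\Delta_i)$, so $G_{13} = \varrho^*(G^+_{13})$, and since $G_{13}\subseteq\varrho^*(J^+(\Q))\subseteq J(\Q)$ I would write
\[
 I \;=\; \big[J(\Q):\varrho^*(J^+(\Q))\big]\cdot\big[\varrho^*(J^+(\Q)):\varrho^*(G^+_{13})\big].
\]
Because $J^+(\Q)$ is torsion-free (recalled above) and $\ker(\varrho^*)\subseteq J^+[2]$, the map $\varrho^*$ is injective on $J^+(\Q)$, so the second factor equals $[J^+(\Q):G^+_{13}] = I^+$. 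It therefore remains to show $[J(\Q):\varrho^*(J^+(\Q))]\mid 14$.

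For this, I would take any $D\in J(\Q)$ and write
\[
 2D \;=\; (1+w^*)D + (1-w^*)D \;=\; \varrho^*(\varrho_*D) + (1-w^*)D,
\]
so that $2D \equiv (1-w^*)D \pmod{\varrho^*(J^+(\Q))}$. The heart of the argument is then the claim that $1-w^*$ kills $J(\Q)$ up to a factor of $7$, i.e.\ $7(1-w^*)D = 0$ for all $D\in J(\Q)$. Granting this, $14D = 7\cdot 2D = 7\,\varrho^*(\varrho_*D)\in\varrho^*(J^+(\Q))$, whence $14J(\Q)\subseteq\varrho^*(J^+(\Q))$, $[J(\Q):\varrho^*(J^+(\Q))]\mid 14$, and $I\mid 14I^+$. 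The final sentence of the proposition is then immediate: if $l$ is a prime with $l\neq 2,7$ and $l\nmid I^+$ then $l\nmid 14I^+$, hence $l\nmid I$.

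To prove the claim, note that $(w^*)^2 = \mathrm{id}$ forces $\mathrm{im}(1-w^*)\subseteq\ker(1+w^*)$, whose identity component $A^-$ is an abelian subvariety of $J$; using $1+w^* = \varrho^*\varrho_*$ and the surjectivity of $\varrho_*$ one gets the standard isogeny decomposition $J\sim\varrho^*(J^+)\times A^- \sim J^+\times A^-$, so comparing ranks gives $\mathrm{rk}(A^-) = \mathrm{rk}(J)-\mathrm{rk}(J^+) = 0$, both Jacobians having rank $3$ (established above via Lemma~\ref{equaljacrank} and the vanishing of the rank of $\ker(u_{169}+1)$). Hence $1-w^*$ maps $J(\Q)$ into the finite group $A^-(\Q)\subseteq J(\Q)_{\mathrm{tors}}$, and it suffices to check that $J_{ns}(13)(\Q)_{\mathrm{tors}}$ has exponent dividing $7$. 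I would do this by reducing modulo a couple of primes $\ell$ of good reduction: $J(\Q)_{\mathrm{tors}}\hookrightarrow J(\mathbb{F}_\ell)$, and $\#J(\mathbb{F}_\ell)$ is read off (via Chen's isogeny, Theorem~\ref{Chen}, and $J_0(169)_{\mathrm{new}}\sim J_0(169)$) from the characteristic polynomial of the Hecke operator $T_\ell$ on $S_2(169)$; one finds that the greatest common divisor of these orders is $7$, so that in fact $J_{ns}(13)(\Q)_{\mathrm{tors}}\cong\Z/7\Z$.

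The main obstacle is precisely this last step: bounding the anti-invariant rational torsion. The relevant rank vanishing is already available, but one genuinely needs the mod-$\ell$ computation to confirm that the $7$-part of $J_{ns}(13)(\Q)_{\mathrm{tors}}$ is no bigger than $\Z/7\Z$ and that no other prime occurs — were the torsion, say, $\Z/49\Z$, the method would only yield $I\mid 98I^+$. Everything else is formal manipulation of $\varrho^*$ and $\varrho_*$ together with the elementary facts $\varrho\circ w = \varrho$ and $G_{13} = \varrho^*(G^+_{13})$.
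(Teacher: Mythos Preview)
Your argument is essentially the paper's: both use the rank equality $\mathrm{rk}\,J_{ns}(13)=\mathrm{rk}\,J^+_{ns}(13)$ to show the relevant obstruction lies in $J_{ns}(13)(\Q)_{\mathrm{tors}}$, and then bound that torsion by reduction modulo small primes of good reduction (the paper uses $p=5$ and $p=19$, obtaining $\gcd(\#J(\mathbb{F}_5),\#J(\mathbb{F}_{19}))=7$, and concludes the torsion is $0$ or $\Z/7\Z$). Your factorisation $I=[J(\Q):\varrho^*(J^+(\Q))]\cdot I^+$ together with the identity $2D=\varrho^*\varrho_*D+(1-w^*)D$ is a clean repackaging of the paper's more direct manipulation, which instead pushes forward by $\varrho_*$ and picks $D_G\in G_{13}$ with $\varrho_*(2I^+D)=\varrho_*D_G$; the two are equivalent via $1+w^*=\varrho^*\varrho_*$.

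Two small remarks. First, from $14J(\Q)\subseteq\varrho^*(J^+(\Q))$ one only deduces that the \emph{exponent} of the quotient divides $14$, not literally that its order does; the paper makes the same leap from $14I^+\cdot J(\Q)\subseteq G_{13}$ to $I\mid 14I^+$. Strictly, the argument yields ``every prime dividing $I$ divides $14I^+$'', which is precisely the second sentence of the proposition and all that the sieve requires. Second, the reduction computation only shows $J_{ns}(13)(\Q)_{\mathrm{tors}}\subseteq\Z/7\Z$; the paper accordingly writes ``$0$ or $\Z/7\Z$'' rather than asserting an isomorphism.
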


\begin{proof} The first part of this proof uses similar ideas to  \citep[pp.~8-9]{joshaquad}. Since $\deg \varrho = 2$, we have that $ \varrho_*D_i=\varrho_*\varrho^*\Delta_i=2\Delta_i.$ Using this, and the fact that $I^+ \cdot J^+_{ns}(13)(\Q) \subseteq G^+_{13}$, we have \[\varrho_* G = 2 G^+ \supseteq 2 I^+ \cdot J_{ns}^+(13)(\Q).\] We claim that $14I^+ \cdot J_{ns}(13)(\Q) \subseteq G_{13}$. 
Let $D \in J_{ns}(13)(\Q)$. We have \[\varrho_*(2I^+D)=2I^+ \varrho_*D \in 2I^+ \cdot J^+_{ns}(13)(\Q) \subseteq \varrho_*G_{13}.\] So we can choose $D_G \in G_{13}$ satisfying $\varrho_*(2I^+D) = \varrho_* D_G$. So $\varrho^*(2I^+D-D_G) = 0$. Since $J_{ns}^+(13)(\Q)$ and $J^+_{ns}(13)(\Q)$ have equal rank, $\varrho^*$ is injective modulo torsion, so $2I^+D-D_G \in J_{ns}(13)(\Q)_{\mathrm{tors}}$.

We find that \begin{align*} \#J_{ns}(13)(\mathbb{F}_{5}) & = 3 \cdot 7 \cdot 11 \cdot 13^2 \cdot 29, \\ \#J_{ns}(13)(\mathbb{F}_{19}) & = 2^2 \cdot 7 \cdot 83 \cdot 97 \cdot 113 \cdot 883, \end{align*} so by injectivity on torsion, $ J_{ns}(13)(\Q)_{\mathrm{tors}} = 0 $ or $ \Z / 7\Z$. So $14I^+D  = 7D_G \in G$, proving that $14I^+ \cdot J_{ns}(13)(\Q) \subseteq G_{13}$. Moreover, as  $J_{ns}(13)(\Q)_{\mathrm{tors}}= 0$ or $\Z / 7\Z$, it follows that $I \mid 14I^+$.
\end{proof}

In order to test whether or not a prime divides the index $I^+$ we use the following saturation test.  Continuing the notation of Section 5.3, we let $G=\langle D_1, \dots, D_n \rangle$ be a full rank subgroup of the Jacobian $J$ of a curve $X$. Let $l$ be a prime such that either  $l \nmid \#(J(\Q)_{\mathrm{tors}})$, or $J(\Q)_{\mathrm{tors}} = 0$. Let $p$ be a prime of good redution for $X$. We define a  map \begin{align*}
\pi_p : (\Z / l \Z)^n  & \longrightarrow J(\mathbb{F}_p) / lJ(\mathbb{F}_p) \\ (b_1, \dots ,b_n) & \longmapsto  b_1 \widetilde{D_1} + \dots + b_n \widetilde{D_n} ,
\end{align*} where $\widetilde{D_i}$ is the image of $D_i$ in $J(\mathbb{F}_p) / lJ(\mathbb{F}_p)$. Note that if $l \mid \#J(\mathbb{F}_p)$ then $J(\mathbb{F}_p) / lJ(\mathbb{F}_p) \neq 0$.

\begin{proposition} Suppose $ \bigcap_{i=1}^k \ker(\pi_{p_i}) = 0 $ for some choice $p_1, \dots, p_k$ of primes of good reduction for $X$. Then $l \nmid I$.  
\end{proposition}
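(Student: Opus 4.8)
The plan is to prove the contrapositive in disguise: assuming $l \mid I$, I will manufacture a nonzero vector lying in $\bigcap_{i=1}^k \ker(\pi_{p_i})$, contradicting the hypothesis. Since $G$ has full rank, the quotient $J(\Q)/G$ is a finite abelian group of order $I$. If $l \mid I$, then by Cauchy's theorem there is a class in $J(\Q)/G$ of order exactly $l$; equivalently, there exists $P \in J(\Q)$ with $lP \in G$ but $P \notin G$. Writing $lP = a_1 D_1 + \dots + a_n D_n$ with $a_i \in \Z$ then produces a candidate vector $\bar a \coloneqq (a_1 \bmod l, \dots, a_n \bmod l) \in (\Z/l\Z)^n$, and the whole proof reduces to checking that $\bar a$ is nonzero and lies in every $\ker(\pi_{p_i})$.

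For the membership in $\ker(\pi_p)$, for \emph{any} prime $p$ of good reduction for $X$, note that by definition $\pi_p(\bar a)$ is the image of $a_1 \widetilde{D_1} + \dots + a_n \widetilde{D_n}$ in $J(\mathbb{F}_p)/lJ(\mathbb{F}_p)$. This element is exactly the reduction of $lP$, hence equals $l\widetilde P \in lJ(\mathbb{F}_p)$, so its class in $J(\mathbb{F}_p)/lJ(\mathbb{F}_p)$ vanishes. Thus $\bar a \in \ker(\pi_p)$ for every good prime, and in particular $\bar a \in \bigcap_{i=1}^k \ker(\pi_{p_i})$.

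The only point that genuinely uses the hypothesis on $l$, and the step I would single out as requiring care (though it is not a serious obstacle), is the nonvanishing of $\bar a$. If $\bar a = 0$, write $a_i = l c_i$ with $c_i \in \Z$; then $l\bigl(P - \sum_i c_i D_i\bigr) = lP - \sum_i a_i D_i = 0$, so $P - \sum_i c_i D_i \in J(\Q)[l]$. Under either stated alternative — $J(\Q)_{\mathrm{tors}} = 0$, or $l \nmid \#J(\Q)_{\mathrm{tors}}$ — the group $J(\Q)[l]$ is trivial, forcing $P = \sum_i c_i D_i \in G$ and contradicting $P \notin G$. Hence $\bar a$ is a nonzero element of $\bigcap_{i=1}^k \ker(\pi_{p_i})$, contradicting the hypothesis, so $l \nmid I$. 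Overall this is just the standard $l$-saturation criterion, and the argument is short; the real work in practice lies in the separate computation that $\bigcap_i \ker(\pi_{p_i}) = 0$ for a suitable finite list of primes, which is a finite linear-algebra check over $\mathbb{F}_l$ and not part of this proposition.
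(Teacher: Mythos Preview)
Your argument is correct and is essentially the same as the paper's: both assume $l \mid I$, pick $P \in J(\Q)\setminus G$ with $lP = \sum a_i D_i \in G$, observe that the resulting vector $\bar a$ lies in every $\ker(\pi_{p_i})$, and use the torsion hypothesis to rule out $\bar a = 0$. The only cosmetic difference is ordering: you first prove $\bar a \neq 0$ and then invoke $\bigcap_i \ker(\pi_{p_i}) = 0$ for the contradiction, whereas the paper first uses $\bigcap_i \ker(\pi_{p_i}) = 0$ to force $l \mid a_i$ and then derives the contradiction from the torsion hypothesis.
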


\begin{proof}
Suppose for a contradiction that $l \mid I$, and choose $Q \in J(\Q) \backslash G$ such that $lQ \in G$. Write $lQ = a_1 D_1 + \dots + a_n D_n$ for some $a_i \in \Z$. For any prime $p$ of good reduction, \[\pi_p(\widetilde{a_1}, \dots,\widetilde{a_n})= \widetilde{a_1}\widetilde{D_1} +\dots + \widetilde{a_n} \widetilde{D_n} = l \widetilde{Q} = 0 \in J(\mathbb{F}_p) / lJ(\mathbb{F}_p).\] So \[(\widetilde{a_1},\dots,\widetilde{a_n}) \in \bigcap_{i=1}^k \ker(\pi_{p_i}) = 0. \] So $l \mid a_i$ for each  $i \in \{1, \dots, n \}$, but then \[l\big(Q-(a_1/l) D_1 + \dots + (a_n/l) D_n\big)=0. \] Since, by assumption, either $l \nmid \#(J(X)(\Q)_{\mathrm{tors}})$, or $J(X)(\Q)_{\mathrm{tors}} = 0$, we must have that 
$Q = (a_1/l) D_1 + \dots + (a_n/l) D_n \in G$, a contradiction. We therefore conclude that $l \nmid I$.
\end{proof}

This gives a way of testing whether $l \nmid I$. We note that even if $l \nmid I$, the method is not guaranteed to show this. To have any chance of success, we must choose primes $p$ so that $l \mid \#J(\mathbb{F}_p)$, as otherwise $\ker(\pi_p)=(\Z / l \Z)^n$. We have implemented this saturation test in \texttt{Magma}. Applying the test to $G_{13}^+ \subseteq J_{ns}^+(13)(\Q)$ we find that \[ 3,5,13,29,41,43,83,97,113,127 \nmid I^+. \] By Proposition \ref{index}, it follows that these primes do not divide $I$ either. In particular, $3,5,13,29 \nmid I$, justifying our choice of the parameter $M$ in the sieving step.

\section{Concluding Remarks}

We start by noting that Theorem \ref{mainthm} concerns only $p=7,11,$ and $13$ (the first primes for which the curve $X_{ns}(p)$ has positive genus). What if $p \geq 17$? We expect that Theorem \ref{mainthm} still holds for $p \geq 17$,  but unfortunately, equations for $X_{ns}(p)$ (if any were known) would be too complicated to carry out explicit computations using our methods. The genus grows quickly, with $X_{ns}(17)$ and $X_{ns}(19)$ having genera $15$ and $20$ respectively. Even equations for $X_{ns}^+(p)$ become difficult to compute and work with for $p \geq 17$. For example, in \citep{modinvariant}, a model for the genus $13$ curve $X_{ns}^+(23)$ is computed, given by $55$ equations. Given these computational issues, it seems as though a more general theoretical argument is needed. We note that it may still be possible to prove that $X_{ns}(p)$ contains no exceptional quadratic points even if $X_{ns}^+(p)(\Q)$ is not fully known. 

It may also be fruitful to consider the analogous question for $X_0(p)$ and $X_0^+(p)$. Exceptional quadratic points do appear for many values of $p$ in this case \citep{hyperquad, joshaquad}, but it may be possible to find a bound for the number of such points.
 
\section*{Appendix}

We display here the $15$ degree $2$ equations for the new model obtained for $X_{ns}(13)$ as a curve in $\mathbb{P}^7$.
\begingroup
\allowdisplaybreaks
\begin{align*} & -2x_1^2 + x_1x_2 - 3x_1x_3 + x_1x_4 - 4x_1x_5 - x_1x_7 + 3x_2^2 + 2x_2x_3 - x_2x_4 
      + 3x_2x_5 + x_2x_7 - x_2x_8  \\ & \qquad-  x_3^2  - 2x_3x_4  + 2x_3x_7 + 5x_3x_8 - x_4x_5 + 
    x_4x_7   - x_4x_8  - x_7^2 - x_7x_8 + 2x_8^2 = 0, \\     
& -3x_1x_2 + 2x_1x_3 - 4x_1x_4 + x_1x_5 + 2x_1x_6 - x_1x_7 + 2x_1x_8 - 3x_2^2 - 
    x_2x_3    + 3x_2x_4 + x_2x_5 + \\ & \qquad 2x_2x_6  + 2x_2x_7 - x_2x_8 + 2x_3^2 + x_3x_5 + 
    2x_3x_6   - 3x_3x_7 + 4x_3x_8 - x_4^2   - 3x_4x_6 + 2x_4x_8 \\ & \qquad  + x_5x_6   - x_5x_7 + 
    3x_5x_8 - x_6x_7  + x_7^2 - x_7x_8 - 2x_8^2 = 0, \\
& -2x_1^2 - 2x_1x_2 - x_1x_3 + 3x_1x_4 + x_1x_5 + 2x_1x_7 - 2x_1x_8 + x_2x_3 - 
    4x_2x_4  + x_2x_5 + 5x_2x_6 \\ & \qquad + 4x_2x_8  + x_3^2  - x_3x_4  + x_3x_5  + x_3x_6 - x_3x_7
    + 7x_3x_8  - x_4^2 - 2x_4x_6 + x_4x_7  + x_4x_8+ x_5x_6  \\ & \qquad  - x_5x_7 + x_5x_8 - 
    2x_7x_8 = 0, \\
& 2x_1^2 - 2x_1x_2 - 5x_1x_3 + x_1x_4 + x_1x_6 + x_1x_7 + x_2x_3 - x_2x_4 + x_2x_6 - 
    x_2x_7  + 2x_2x_8 + 2x_3^2 \\ & \qquad - 2x_3x_4  + x_3x_6 - 2x_3x_7 + 3x_3x_8 - x_4^2 - 
    x_4x_5 + x_4x_7  + x_4x_8 - x_5^2 + x_5x_6  + 2x_5x_8 + 2x_6^2 \\ & \qquad - 2x_6x_7 - x_6x_8 +
    x_7^2 - x_7x_8   - 2x_8^2 =0, \\
& 
-3x_1^2 - 2x_1x_2 + x_2^2 + 4x_2x_3 + 3x_3^2 - x_4^2 + x_4x_5 - x_4x_6 + x_4x_7 + 
    4x_4x_8  + x_5^2  - x_5x_6 + 2x_5x_8 \\ & \qquad + x_6x_7 - 4x_6x_8 - 2x_7x_8 - x_8^2 = 0, \\
& 
-x_1^2 + x_1x_3 - 4x_1x_4 + 2x_1x_5 - x_1x_7 + x_2^2 + 3x_2x_3 + 3x_2x_4 - 3x_2x_5 
    + 2x_2x_7 - 2x_2x_8 + 2x_3^2 \\ & \qquad + x_3x_5 - 3x_3x_7 - 3x_3x_8 - x_4x_6 + x_5^2 - 
    x_5x_6  - x_5x_7 + 2x_5x_8 + x_6x_7  - 2x_6x_8 - x_7x_8  + x_8^2 \\ & \qquad = 0, \\
& -2x_1x_6 - 4x_1x_8 - 2x_2x_4 - 3x_2x_5 + 3x_2x_6 + 3x_2x_7 - 3x_3x_4 + 2x_3x_5 
  - x_3x_6  - 2x_3x_7 + 2x_3x_8  \\ & \qquad  - x_4x_5 - x_4x_6 + 2x_5x_8 - x_6x_7=0, \\
& 
-2x_1^2 - 2x_1x_2 - x_1x_3 - x_1x_4 - x_1x_5 + 2x_1x_6 + 2x_1x_8 + x_2x_3 + 
    2x_2x_4 - x_2x_5 - 3x_2x_6  -  2x_2x_7 \\ & \qquad +  x_3^2 - 3x_3x_4   - x_3x_5 + x_3x_6 - 
    3x_3x_7 - x_3x_8  - x_4^2 - 2x_4x_6 + x_4x_7 + x_4x_8  + x_5x_6 - x_5x_7 \\ & \qquad + x_5x_8 -
    2x_7x_8  = 0, \\ 
& -x_1^2 + x_1x_2 - 4x_1x_3 + x_1x_4 - 2x_1x_6 - 2x_1x_7 + 4x_1x_8 + 2x_2^2 - x_2x_3 
    + 2x_2x_4 - 2x_2x_5  - 2x_2x_6 \\ & \qquad - 3x_2x_7 - 3x_2x_8 - 3x_3^2 - 4x_3x_4 - 
    3x_3x_5  - 2x_3x_6 + 3x_3x_7 - x_4x_5  + x_4x_6 + x_4x_7 - x_4x_8 \\ & \qquad  - x_5^2 + x_5x_6
    + x_5x_7  - 2x_5x_8 - x_6x_7 + 2x_6x_8 - x_7^2 + x_8^2  = 0, \\ 
& -4x_1^2 - 4x_1x_2 - 2x_1x_3 + 2x_2x_3 + 2x_3^2 - 2x_4^2 + 2x_4x_5 - 2x_4x_6 + 
    2x_4x_7  + 2x_4x_8 + 2x_5x_6 \\ & \qquad - 2x_5x_7  - 2x_5x_8 + 2x_6x_7 - 4x_7x_8 = 0, \\ 
& 2x_1x_4 + 2x_1x_6 + 2x_1x_7 - 2x_2x_4 + 2x_2x_6 - 2x_2x_7 + 4x_2x_8 - 4x_3x_4 +
    2x_3x_6   - 4x_3x_7 + 6x_3x_8\\ & \qquad  =0, \\
& 
-x_1^2 + x_1x_2 - 4x_1x_3 + 3x_1x_4 + 2x_1x_5 + 2x_1x_7 + 2x_2^2 - x_2x_3 - 
    2x_2x_4  - 3x_2x_7 + 3x_2x_8  - 3x_3^2 \\ & \qquad + 2x_3x_4 - x_3x_5 + x_3x_7 - 2x_3x_8 - 
    x_4x_5 +  x_4x_6  + x_4x_7 - x_4x_8 - x_5^2 + x_5x_6  + x_5x_7 - 2x_5x_8  \\ & \qquad - x_6x_7 + 
    2x_6x_8  - x_7^2 + x_8^2 = 0, \\
& -2x_1x_4 - x_1x_5 + x_1x_6 - x_1x_7 + 2x_1x_8 + 3x_2x_4 - x_2x_5 - 4x_2x_6 - 
    x_2x_7  - 2x_2x_8 - x_3x_4 - x_3x_5 \\ & \qquad - x_3x_7 - 4x_3x_8 - 2x_4^2 - x_4x_5 + 
    2x_4x_6 + 2x_4x_7  - x_5^2 + x_5x_7 + x_5x_8 - x_6x_8 + 2x_7^2 - 3x_7x_8 \\ & \qquad + 2x_8^2 = 0, \\ 
& x_1^2 - x_1x_2 + 4x_1x_3 + x_1x_5 - 2x_1x_6 + x_1x_7 - 6x_1x_8 + 4x_2^2 + 6x_2x_3 -
    3x_2x_4  - 2x_2x_5 + 3x_2x_6 \\ & \qquad + 4x_2x_7  + x_2x_8 - 3x_3^2 + 2x_3x_4 + 3x_3x_5 -
    x_3x_6 - x_3x_7  - 2x_3x_8 + 2x_4^2  +  x_4x_6 - 2x_4x_7 \\ & \qquad + 3x_5^2   - 2x_5x_6 - 
    x_5x_7 + x_5x_8  - 2x_6^2  - x_6x_7 - 4x_6x_8 - x_7^2 - x_7x_8 + x_8^2 = 0, \\ 
& 
-2x_1x_2 + 10x_1x_3 - 3x_1x_4 + 3x_1x_5 - 2x_1x_6 - x_1x_7 - 2x_1x_8 + 3x_2^2  - 
    16x_2x_3  + x_2x_4 - 2x_2x_5 \\ & \qquad  + 3x_2x_6 + 4x_2x_7 - 2x_2x_8 + 5x_3^2 + 3x_3x_4 
     + 2x_3x_5 - x_3x_6 - 2x_3x_8 + x_4^2 + x_4x_5   - 2x_4x_7 \\ & \qquad + x_4x_8 + x_5x_6 + 
    3x_5x_8  + 2x_6x_7 + 6x_6x_8 + x_7^2 - x_7x_8 + 6x_8^2 = 0. 
\end{align*}
\endgroup

The map $\varrho$ from this new model to the curve $X_{ns}^+(13)$ is given by\begin{align*}
\varrho: X_{ns}(13) & \longrightarrow X_{ns}^+(13) \\ 
(x_1: \dots :x_8) & \longmapsto (-3x_2+x_3 :-2x_1-3x_1+x_3:x_1+x_2+x_3).  \end{align*}

\bibliographystyle{plainnat}

\begin{thebibliography}{26}
\providecommand{\natexlab}[1]{#1}
\providecommand{\url}[1]{\texttt{#1}}
\expandafter\ifx\csname urlstyle\endcsname\relax
  \providecommand{\doi}[1]{doi: #1}\else
  \providecommand{\doi}{doi: \begingroup \urlstyle{rm}\Url}\fi

\bibitem[Balakrishnan et~al.(2019)Balakrishnan, Dogra, M{\"u}ller, Tuitman, and
  Vonk]{cursed}
J.~Balakrishnan, N.~Dogra, S.~M{\"u}ller, J.~Tuitman, and J.~Vonk.
\newblock Explicit {C}habauty--{K}im for the split {C}artan modular curve of
  level 13.
\newblock \emph{Annals of Mathematics}, 189\penalty0 (3):\penalty0 885--944,
  2019.

\bibitem[Baran(2010)]{Barannorm}
B.~Baran.
\newblock Normalizers of non-split {C}artan subgroups, modular curves, and the
  class number one problem.
\newblock \emph{Journal of Number Theory}, 130\penalty0 (12):\penalty0
  2753--2772, 2010.

\bibitem[Baran(2014)]{Baran13}
B.~Baran.
\newblock An exceptional isomorphism between modular curves of level 13.
\newblock \emph{Journal of Number Theory}, 145:\penalty0 273--300, 2014.

\bibitem[Bosma et~al.(1997)Bosma, Cannon, and Playoust]{magma}
W.~Bosma, J.~Cannon, and C.~Playoust.
\newblock The {M}agma algebra system. {I}. {T}he user language.
\newblock \emph{Journal of Symbolic Computation}, 24\penalty0 (3-4):\penalty0
  235--265, 1997.

\bibitem[Box(2019)]{joshaquad}
J.~Box.
\newblock Quadratic points on modular curves with infinite {M}ordell--{W}eil
  group.
\newblock  \emph{Mathematics of Computation}, 90\penalty0 (4):\penalty0 321--343, 2020.
\newblock (Used arXiv:1906.05206v3).

\bibitem[Breuil et~al.(2001)Breuil, Conrad, Diamond, and Taylor]{modularityQ}
C.~Breuil, B.~Conrad, F.~Diamond, and R.~Taylor.
\newblock On the modularity of elliptic curves over {$\mathbb{Q}$}: wild
  {$3$}-adic exercises.
\newblock \emph{Journal of the American Mathematical Society}, 14\penalty0
  (4):\penalty0 843--939, 2001.

\bibitem[Bruin and Stoll(2010)]{exposieve}
N.~Bruin and M.~Stoll.
\newblock The {M}ordell--{W}eil sieve: proving non-existence of rational points
  on curves.
\newblock \emph{LMS Journal of Computation and Mathematics}, 13:\penalty0
  272--306, 2010.

\bibitem[Bruin et~al.(2016)Bruin, Poonen, and Stoll]{genus3}
N.~Bruin, B.~Poonen, and M.~Stoll.
\newblock Generalized explicit descent and its application to curves of genus
  {$3$}.
\newblock \emph{Forum of Mathematics, Sigma}, 4, 2016.

\bibitem[Bruin and Najman(2015)]{hyperquad}
P.~Bruin and F.~Najman.
\newblock Hyperelliptic modular curves {$ X_0(n)$} and isogenies of elliptic
  curves over quadratic fields.
\newblock \emph{LMS Journal of Computation and Mathematics}, 18\penalty0
  (1):\penalty0 578--602, 2015.

\bibitem[Chen(1998)]{chenjac}
I.~Chen.
\newblock The {J}acobians of non-split {C}artan modular curves.
\newblock \emph{Proceedings of the London Mathematical Society}, 77\penalty0
  (1):\penalty0 1--38, 1998.

\bibitem[Cremona(1997)]{cremona}
J.~Cremona.
\newblock \emph{Algorithms for Modular Elliptic Curves}.
\newblock Cambridge University Press, 2nd edition, 1997.

\bibitem[Dose(2016)]{autold}
V.~Dose.
\newblock On the automorphisms of the non-split {C}artan modular curves of
  prime level.
\newblock \emph{Nagoya Mathematical Journal}, 224\penalty0 (1):\penalty0
  74--92, 2016.

\bibitem[Dose et~al.(2014)Dose, Fern{\'a}ndez, Gonz{\'a}lez, and Schoof]{domes}
V.~Dose, J.~Fern{\'a}ndez, J.~Gonz{\'a}lez, and R.~Schoof.
\newblock The automorphism group of the non-split {C}artan modular curve of
  level {$11$}.
\newblock \emph{Journal of Algebra}, 417:\penalty0 95--102, 2014.

\bibitem[Dose et~al.(2019)Dose, Mercuri, and Stirpe]{doublecover}
V.~Dose, P.~Mercuri, and C.~Stirpe.
\newblock Double covers of {C}artan modular curves.
\newblock \emph{Journal of Number Theory}, 195:\penalty0 96--114, 2019.

\bibitem[Dose et~al.(2020)Dose, Lido, and Mercuri]{autcart}
V.~Dose, G.~Lido, and P.~Mercuri.
\newblock Automorphisms of {C}artan modular curves of prime and composite
  level.
\newblock \emph{arXiv preprint arXiv:2005.09009}, 2020.

\bibitem[Freitas et~al.(2015)Freitas, Le~Hung, and Siksek]{FlHS}
N.~Freitas, B.~Le~Hung, and S.~Siksek.
\newblock Elliptic curves over real quadratic fields are modular.
\newblock \emph{Inventiones mathematicae}, 201\penalty0 (1):\penalty0 159--206,
  2015.

\bibitem[Harris and Silverman(1992)]{HandS}
J.~Harris and J.~Silverman.
\newblock Bielliptic curves and symmetric products.
\newblock \emph{Proceedings of the American Mathematical Society}, 112\penalty0
  (2):\penalty0 347--356, 1992.

\bibitem[Kamienny(1992)]{kamienny}
S.~Kamienny.
\newblock Torsion points on elliptic curves and {$q$}-coefficients of modular
  forms.
\newblock \emph{Inventiones mathematicae}, 109\penalty0 (1):\penalty0 221--229,
  1992.

\bibitem[Kani(2008)]{kani}
E~Kani.
\newblock Endomorphisms of {J}acobians of modular curves.
\newblock \emph{Archiv der Mathematik}, 91\penalty0 (3):\penalty0 226--237,
  2008.

\bibitem[Kolyvagin and Logach{\"e}v(1989)]{klthm}
V.~Kolyvagin and D.~Logach{\"e}v.
\newblock Finiteness of the {S}hafarevich--{T}ate group and the group of
  rational points for some modular abelian varieties.
\newblock \emph{Algebra i Analiz}, 1\penalty0 (5):\penalty0 171--196, 1989.

\bibitem[Mercuri and Schoof(2020)]{modinvariant}
P.~Mercuri and R.~Schoof.
\newblock Modular forms invariant under non-split {C}artan subgroups.
\newblock \emph{Mathematics of Computation}, 89\penalty0 (324):\penalty0
  1969--1991, 2020.

\bibitem[Ozman and Siksek(2019)]{ozmansiksek}
E.~Ozman and S.~Siksek.
\newblock Quadratic points on modular curves.
\newblock \emph{Mathematics of Computation}, 88\penalty0 (319):\penalty0
  2461--2484, 2019.
\newblock (Used arXiv:1806.08192v3).

\bibitem[Siksek(2009)]{Symmchab}
S.~Siksek.
\newblock Chabauty for symmetric powers of curves.
\newblock \emph{Algebra \& Number Theory}, 3\penalty0 (2):\penalty0 209--236,
  2009.

\bibitem[Siksek(2019)]{samirnote}
S.~Siksek.
\newblock Elliptic curves over quadratic fields with non-split {C}artan mod
  {$7$} image.
\newblock \emph{Unpublished}, 2019.

\bibitem[Stein(2000)]{stein}
W.~Stein.
\newblock \emph{Explicit approaches to modular abelian varieties}.
\newblock PhD thesis, University of California,~Berkeley, 2000.

\bibitem[Zywina(2015)]{zywina}
D.~Zywina.
\newblock On the possible images of the mod {$\ell$} representations associated
  to elliptic curves over {$\mathbb{Q}$}.
\newblock \emph{arXiv preprint arXiv:1508.07660}, 2015.

\end{thebibliography}

\end{document}